\documentclass[11pt]{article}

\usepackage{mathrsfs}
\usepackage{amsfonts}
\usepackage{titlesec}
\usepackage{enumerate}
\usepackage[toc,page,title,titletoc,header]{appendix}
\usepackage{amsmath,amsthm,amssymb,anysize}
\usepackage[numbers,sort&compress]{natbib}
\usepackage{color}
\usepackage{graphicx}
\usepackage{threeparttable}
\usepackage[all]{xy}

\theoremstyle{definition}
\newtheorem{thm}{Theorem}[section]
\newtheorem{lem}{Lemma}[section]
\newtheorem{rem}[lem]{Remark}
\newtheorem{prop}[lem]{Proposition}
\newtheorem{defn}[lem]{Definition}
\newtheorem{cor}[lem]{Corollary}

\newtheorem{exa}{Example}
\newtheorem{remark}{Remark}
\newtheorem*{conclusion}{Conclusion}
\setlength{\parindent}{1em} \setlength{\baselineskip}{20pt}


\pagestyle{myheadings}
\marginsize{3cm}{3cm}{0.5cm}{2.2cm}
\numberwithin{equation}{section}
\numberwithin{equation}{subsection}

\newcommand{\C}{\mathbb{C}}
\newcommand{\F}{\mathbb{F}}

\def\J{\mathfrak{J}}
\def\S{\mathrm{S}}

\markright{On cohomology and deformations of Jacobi-Jordan algebras}
\title{\textsf{On cohomology and deformations of Jacobi-Jordan algebras}}
\author{Yong Yang $^{a,b}$
\\
\\ \small\textit{$^{a}$School of Mathematics and Statistics, Northeast Normal University, Changchun 130024, China
}
\\ \small\textit{$^{b}$Doctoral School of Physics, University of P\'{e}cs, P\'{e}cs 7622, Hungary
}
  }
\date{ }

\begin{document}
\makeatletter
\newcommand{\rmnum}[1]{\romannumeral #1}
\newcommand{\Rmnum}[1]{\expandafter\@slowromancap\romannumeral #1@}
\makeatother
\maketitle
\let\thefootnote\relax\footnote{
E-mail address: yangyong195888221@163.com}

\begin{quotation}
\small\noindent \textbf{Abstract}:
In this paper we study cohomology and  deformations of Jacobi-Jordan algebras. We develop their formal deformation theory. In particular, we introduce a method to construct a versal deformation for a given Jacobi-Jordan algebra, which can induce all deformations and is unique on the infinitesimal level.
We construct formal 1-parameter deformations of Jacobi-Jordan algebras up to dimension 5 and versal deformations for 3-dimensional Jacobi-Jordan algebras.

\vspace{0.2cm} \noindent{\textbf{Keywords}}: Jacobi-Jordan algebra; cohomology; formal deformation

\vspace{0.2cm} \noindent{\textbf{Mathematics Subject Classification 2020}}: 14B12, 17C10

\end{quotation}
\setcounter{section}{-1}
\section{Introduction}
Jacobi-Jordan algebras (JJ algebras for short) were introduced in \cite{D-A}  in and further studied \cite{A-M}. They are also called Jordan-Lie algebras \cite{J-L}, mock-Lie algebras \cite{Z} and Jordan-nilalgebras of index 3 \cite{J-C}. On one hand,  by definition, these algebras can be obtained from Lie algebras by replacing skew commutativity by commutativity. On the other hand, these algebras also can be viewed as a special class of Jordan algebras \cite{D-A}. However, they have a lot of  quite different properties from either Lie algebras or Jordan algebras, which raises many interesting questions. For example, it is proved that every JJ algebra is nilpotent \cite{D-A}. Recently, the cohomology and formal 1-parameter deformations of JJ algebras were developed in \cite{B5}.
The theory of JJ algebras has many applications not only in the study of modern mathematics
and theoretical physics \cite{K,Ne} but also in the study of Bernstein-Jordan algebras and train algebras \cite{A,S,C,FZ}. For example, JJ algebras are introduced as examples of the more popular and well-referenced Jordan algebras to achieve an axiomatization for the algebra of observables in quantum mechanics \cite{Ne}.

In this paper, we study  deformations of JJ algebras and we also construct deformations in low-dimensional cases.
Suppose that the ground field $\mathbb{F}$ is algebraically closed of
characteristic not 2 or 3. The structure of the paper is as follows: In Section 1, we introduce some related
definitions  and properties of JJ algebras, including symplectic and pseudo-euclidean structures and the  classification in low dimensions. In Section 2, we give the definition of  cohomology and Massey products. In Section 3, we introduce some basic definitions of (formal) deformations. In Section 4, we focus on the theory of  formal 1-parameter deformations and construct deformations up to dimension 5 by computing 2-cohomology. In Section 5, we study versal deformations. Starting with a universal infinitesimal deformation, one can extend it to a versal deformation step by step.
At last, we give the construction of versal deformation in dimension 3.

\section{Jacobi-Jordan algebras}
\begin{defn}\cite[Definition 2.1]{D-A}
An algebra $(\mathfrak{J}, \cdot)$ is called a \emph{Jacobi-Jordan algebra} (\emph{JJ algebra} for short), if it satisfies the following two identities:
\begin{eqnarray*}
  &&x\cdot y= y\cdot x \\
  &&x\cdot(y\cdot z)+y\cdot(z\cdot x)+z\cdot(x\cdot y)=0\ \text{(Jacobi identity)}
\end{eqnarray*}
for all $x,y,z\in \mathfrak{J}$.
\end{defn}

\begin{exa}\cite{D-A}
Let $H_m$ be a space with a basis
$$\{x_1,\ldots, x_m,y_1,\ldots,y_m,z\}.$$
The JJ algebra structure is given by
$x_iy_i=y_ix_i=z$ for $i=1,\ldots,m$ and is called \emph{Heisenberg JJ algebra}.
\end{exa}

\begin{defn}\cite[Proposition 4.1]{B4}
Suppose that $(\mathfrak{J}, \cdot)$ is  a JJ algebra, $V$ is a  vector space and $\pi: \J\longrightarrow \mathrm{End}(V)$ is a linear mapping. If $\pi$ satisfies
$$\pi(x)\pi(y)+\pi(y)\pi(x)=-\pi(x\cdot y),$$
for any $x,y \in\J$, then $(\pi, V)$ is called a \emph{representation} of $(\mathfrak{J}, \cdot)$.
\end{defn}

\begin{exa}
A JJ algebra $\J$ is a representation of itself with respect to the multiplication  of $\J$. In this case, it is called the \emph{adjoint representation} of $\J$.
\end{exa}
\subsection{Classification in low dimensions}
Classification  of low-dimensional JJ algebras over algebraically closed fields of characteristic not 2 or 3 has been considered in \cite{D-A}. From now on, we suppose that $\mathfrak{J}_{m}$ is an $m$-dimensional JJ algebra, spanned by
$$\{e_{1},\cdots,e_m\}.$$
We recall the classification of nontrivial JJ algebras in \cite{D-A}:

$\bullet$ in dimension 2,

(1) $\mathfrak{J}_{1,2}$: $ e_1^{2}=e_2$,

$\bullet$ in dimension 3,

(1) $\mathfrak{J}_{1,2}\oplus \F$: $ e_1^{2}=e_2$,

(2) $\mathfrak{J}_{1,3}$: $e_1^{2}=e_3^{2}=e_2$,

$\bullet$ in dimension 4,

(1) $\mathfrak{J}_{1,2}\oplus \F^{2}$: $ e_1^{2}=e_2$,

(2) $\mathfrak{J}_{1,3}\oplus \F$: $e_1^{2}=e_3^{2}=e_2$,

(3) $\mathfrak{J}_{1,2}^{2}$: $e_1^{2}=e_2$, $e_3^{2}=e_4$,

(4) $\mathfrak{J}_{1,4}$: $e_1^{2}=e_2$, $e_1\cdot e_3=e_4$,

(5) $\mathfrak{J}_{2,4}$: $e_1^{2}=e_3\cdot e_4=e_2$,

$\bullet$ In dimension 5,

(1) $\mathfrak{J}_{1,2}\oplus \F^{3}$: $ e_1^{2}=e_2$,

(2) $\mathfrak{J}_{1,3}\oplus \F^{2}$: $e_1^{2}=e_3^{2}=e_2$,

(3) $\mathfrak{J}_{1,2}^{2}\oplus \F$: $e_1^{2}=e_2$, $e_3^{2}=e_4$,

(4) $\mathfrak{J}_{1,4}\oplus \F$: $e_1^{2}=e_2$, $e_1\cdot e_3=e_4$,

(5) $\mathfrak{J}_{2,4}\oplus \F$: $e_1^{2}=e_3\cdot e_4=e_2$,

(6) $\mathfrak{J}_{1,2}\oplus \mathfrak{J}_{1,3}$: $e_1^{2}=e_2$, $e_3^{2}=e_5^{2}=e_4$,

(7) $\mathfrak{J}_{1,5}$: $e_1^{2}=e_2$, $e_1\cdot e_3=e_5$, $e_3^{2}=e_4$,

(8) $\mathfrak{J}_{2,5}$: $e_1^{2}=e_2$, $e_1\cdot e_4=e_3^{2}=e_5$,

(9) $\mathfrak{J}_{3,5}$: $e_1^{2}=e_2$, $e_1\cdot e_4=e_3\cdot e_4=e_5$, $e_3^{2}=-e_2+e_5$,

(10) $\mathfrak{J}_{4,5}$: $e_1^{2}=e_2$, $e_3^{2}=e_4$, $e_5^{2}=-e_2+e_4$,

(11) $\mathfrak{J}_{5,5}$: $e_1^{2}=e_2$, $e_3^{2}=e_4$, $e_3\cdot e_5=-e_2+e_4$,

(12) $\mathfrak{J}_{6,5}$: $e_1^{2}=e_1\cdot e_4=e_2$, $e_1\cdot e_3=e_5$,

(13) $\mathfrak{J}_{7,5}$: $e_1^{2}=e_3^{2}=e_4\cdot e_5=e_2$,

(14) $\mathfrak{J}_{8,5}$: $e_1^{2}=e_2$, $e_1\cdot e_4=e_5$, $e_2\cdot e_4=-2e_1\cdot e_5=2e_3$.
\begin{remark}
The classification has stopped at dimension 5, since in dimension 6 infinitely many isomorphism classes of commutative, associative algebras show up \cite{C1}.
\end{remark}
\begin{remark}
An algebra $(\mathfrak{L}, [\ ,\ ])$ is called a \emph{(left) Leibniz algebra} if it satisfies
$$[x,[y,z]]=[[x,y],z]+[y,[x,z]]$$
for any $x,y,z \in\mathfrak{L}$.
Note that except for $\mathfrak{J}_{8,5}$, every JJ algebra $\J$ of dimension $\leq 5$ is associative and $\J^{3}=0$. We obtain that except for $\mathfrak{J}_{8,5}$, every JJ algebra in the classification is Leibniz algebra as well.
\end{remark}

\subsection{Symplectic and  pseudo-euclidean structures}

The theory of symplectic and  pseudo-euclidean structures were introduced for Jordan algebras in \cite{B1,B2,B3} and developed for JJ algebras in \cite{B4}. Here we study the symplectic and  pseudo-euclidean structures of low-dimensional JJ algebras.
\begin{defn}
A JJ algebra $(\mathfrak{J},\cdot)$ is called

(1) \emph{symplectic} if it is endowed with a non-degenerate skew-symmetric bilinear form $w$ satisfying
$$\omega(x\cdot y,z)+\omega(y\cdot z,x)+\omega(z\cdot x,y)=0,$$
for any $x,y,z\in \mathfrak{J}$;

(2) \emph{pseudo-euclidean} if it is endowed with a non-degenerate  symmetric bilinear form $B$ satisfying
$$B(x\cdot y,z)=B(x,y\cdot z),$$
for any $x,y,z\in \mathfrak{J}$.
\end{defn}

\begin{cor}
The dimension of a symplectic JJ algebra is even.
\end{cor}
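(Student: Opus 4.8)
The plan is to observe that this statement is purely a fact of symplectic linear algebra and makes no use of either the Jacobi-Jordan multiplication or the cocycle condition imposed on $\omega$; the only hypotheses that matter are that $\omega$ is a non-degenerate skew-symmetric bilinear form on the finite-dimensional space $\J$ and that $\mathrm{char}\,\F \neq 2$. So I would discard the algebra structure entirely and argue at the level of the bilinear form.

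First I would fix a basis $\{e_1,\dots,e_n\}$ of $\J$ with $n=\dim \J$ and form the Gram matrix $A=(\omega(e_i,e_j))_{i,j}$. Skew-symmetry of $\omega$ translates into $A^{T}=-A$, and non-degeneracy of $\omega$ is exactly the condition $\det A\neq 0$. Taking determinants then gives $\det A=\det A^{T}=\det(-A)=(-1)^{n}\det A$. If $n$ were odd this would force $2\det A=0$, and since $\mathrm{char}\,\F\neq 2$ we could cancel the $2$ to get $\det A=0$, contradicting non-degeneracy. Hence $n$ is even, as claimed.

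The only point requiring care, and the place where the characteristic hypothesis genuinely enters, is the cancellation $2\det A=0\Rightarrow\det A=0$: in characteristic $2$ a non-degenerate skew-symmetric form can exist in odd dimension, so the standing assumption $\mathrm{char}\,\F\neq 2$ is essential rather than cosmetic. As an alternative to the determinant computation, one could produce a symplectic (Darboux) basis directly: repeatedly pick $x$ with $\omega(x,\cdot)\neq 0$, choose $y$ with $\omega(x,y)=1$, split off the non-degenerate plane $\spn\{x,y\}$, and induct on its $\omega$-orthogonal complement, which is again non-degenerate of dimension $n-2$. This exhibits $n=2m$ explicitly and likewise uses $\mathrm{char}\,\F\neq 2$ only to guarantee that skew-symmetry forces $\omega(x,x)=0$, so that $\{x,y\}$ is a genuine hyperbolic pair.
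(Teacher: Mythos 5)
Your proof is correct. The paper states this corollary without proof, treating it as the standard linear-algebra fact that a finite-dimensional space carrying a non-degenerate skew-symmetric bilinear form has even dimension (in characteristic $\neq 2$); your determinant argument, and the alternative Darboux-basis argument, are exactly the standard justifications being implicitly invoked, and your observation that the JJ multiplication and the cocycle condition on $\omega$ play no role is accurate.
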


\begin{defn}
 Two symplectic (or pseudo-euclidean) JJ algebras $(\J,w)$ and $(\J' ,w')$
are called \emph{isometrically isomorphic} (or \emph{i-isomorphic}) if there exists a JJ  algebra
isomorphism $f: \J \rightarrow \J'$ satisfying $w(f(x),f(y))=w'(x,y)$ for all $x,y\in\J$.
\end{defn}

\begin{defn}
Let $(\mathfrak{J},\omega)$ be a symplectic (or pseudo-euclidean) JJ algebra. An ideal $I$ of $\mathfrak{J}$ is called non-degenerate if $w|_{I\times I}$ is non-degenerate.
\end{defn}

Similar to \cite[Proposition 2.6]{M1}, the following Lemma allows us to focus our studies on indecomposable cases.
\begin{lem}
 Let $(\mathfrak{J},\omega)$ be a symplectic (or pseudo-euclidean) JJ algebra. Then
 $$\mathfrak{J}=\bigoplus\limits_{i=1}^{r}\mathfrak{J}_{i},$$
such that, for all $1\leq i\leq r$,

(1) $\mathfrak{J}_{i}$ is a non-degenerate ideal of $\mathfrak{J}$.

(2) $\mathfrak{J}_{i}$ contains no nontrivial non-degenerate ideal of $\mathfrak{J}$.

(3) $\omega(\mathfrak{J}_{i},\mathfrak{J}_{j})=0$ for all $i\neq j$.
\end{lem}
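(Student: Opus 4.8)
The plan is to induct on $\dim\J$, peeling off one orthogonal indecomposable block at a time. If $\J$ itself contains no nontrivial non-degenerate ideal, then $r=1$ and $\J_1=\J$ already satisfies (1)--(3). Otherwise, among all nonzero non-degenerate ideals of $\J$ I would choose one, $\J_1$, of minimal dimension. By minimality no proper nonzero non-degenerate ideal of $\J$ can sit inside $\J_1$, so $\J_1$ automatically satisfies (2).

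Next I would introduce the orthogonal complement $\J_1^{\perp}=\{x\in\J:\omega(x,\J_1)=0\}$ and check that $\J=\J_1\oplus\J_1^{\perp}$ as vector spaces: the restriction $\omega|_{\J_1\times\J_1}$ is non-degenerate, so $\J_1\cap\J_1^{\perp}=0$, while non-degeneracy of $\omega$ on $\J$ forces $\dim\J_1+\dim\J_1^{\perp}=\dim\J$. Condition (3) for the pair $(\J_1,\J_1^{\perp})$ is then immediate from the definition of $\J_1^{\perp}$.

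The heart of the argument, and the step I expect to be the main obstacle, is to show that $\J_1^{\perp}$ is an ideal with $\J_1\cdot\J_1^{\perp}=0$; granting that $\J_1^{\perp}$ is an ideal, the product relation follows from $\J_1\cdot\J_1^{\perp}\subseteq\J_1\cap\J_1^{\perp}=0$, the product lying in $\J_1$ because $\J_1$ is an ideal and in $\J_1^{\perp}$ because $\J_1^{\perp}$ is. In the pseudo-euclidean case the ideal property is a one-line computation: for $v\in\J_1^{\perp}$, $g\in\J$ and $w\in\J_1$, commutativity and invariance give $B(g\cdot v,w)=B(v\cdot g,w)=B(v,g\cdot w)=0$, since $g\cdot w\in\J_1$ while $v\in\J_1^{\perp}$. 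The symplectic case is genuinely more delicate, because the cyclic identity only delivers $\omega(g\cdot v,w)=-\omega(v\cdot w,g)$, and the term $v\cdot w\in\J_1$ paired with an arbitrary $g$ does not vanish for formal reasons. Here I would instead aim to prove $\J_1\cdot\J_1^{\perp}=0$ directly: the cyclic identity shows each operator $L_v\colon w\mapsto v\cdot w$ on $\J_1$ is self-adjoint for the non-degenerate form $\omega|_{\J_1\times\J_1}$, the Jacobi identity imposes a Leibniz-type relation on $L_v$, and I would combine these constraints with the minimality of $\J_1$ (and the nilpotency of $\J$) to force $L_v=0$. Once $\J_1\cdot\J_1^{\perp}=0$ is known, the ideal property of $\J_1^{\perp}$ follows exactly as in the pseudo-euclidean computation.

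Finally, with $\J_1\cdot\J_1^{\perp}=0$ in hand, $\J_1^{\perp}$ is itself a symplectic (resp.\ pseudo-euclidean) JJ algebra of strictly smaller dimension, and the ideals of $\J_1^{\perp}$ coincide with the ideals of $\J$ contained in $\J_1^{\perp}$. Applying the induction hypothesis to $\J_1^{\perp}$ yields $\J_1^{\perp}=\bigoplus_{i=2}^{r}\J_i$ with each $\J_i$ a non-degenerate ideal of $\J_1^{\perp}$, hence of $\J$, and satisfying (2) relative to $\J_1^{\perp}$; since any proper nonzero non-degenerate ideal of $\J$ lying inside $\J_i\subseteq\J_1^{\perp}$ would be such an ideal of $\J_1^{\perp}$, condition (2) also holds relative to $\J$. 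The relations $\omega(\J_i,\J_j)=0$ for $2\le i\neq j$ come from the induction, and $\omega(\J_1,\J_i)=0$ for $i\ge 2$ from $\J_i\subseteq\J_1^{\perp}$, which gives (3) and completes the induction.
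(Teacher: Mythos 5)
The paper does not actually prove this lemma: it only remarks that the statement is ``similar to'' the decomposition of quadratic Lie superalgebras in \cite[Proposition 2.6]{M1}, so there is no written argument to compare yours against. Your pseudo-euclidean branch is exactly that standard argument and is complete and correct: invariance of $B$ gives $B(g\cdot v,w)=B(v,g\cdot w)=0$, so $\J_1^{\perp}$ is an ideal, $\J=\J_1\oplus\J_1^{\perp}$ orthogonally, $\J_1\cdot\J_1^{\perp}\subseteq\J_1\cap\J_1^{\perp}=0$, and your bookkeeping for passing conditions (1)--(3) through the induction is right. Note that in this branch minimality of $\J_1$ is not even needed for the splitting, only for condition (2).

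The symplectic branch, however, breaks at exactly the step you flag, and the claim you propose to prove there --- that a minimal non-degenerate ideal $\J_1$ satisfies $\J_1\cdot\J_1^{\perp}=0$ --- is false. Take $\J=\mathrm{span}\{a_1,a_2,a_3,a_4,v,v'\}$ with the only nonzero products $v\cdot a_1=a_1\cdot v=-a_4$ and $v\cdot a_2=a_2\cdot v=a_3$, and $\omega$ determined by $\omega(a_1,a_3)=\omega(a_2,a_4)=\omega(v,v')=1$ (all other pairings of basis vectors zero). Since $\J\cdot\J^{2}=0$ the Jacobi identity is automatic, and the cyclic identity for $\omega$ reduces to $\omega(v\cdot a_1,a_2)+\omega(a_2\cdot v,a_1)=\omega(a_2,a_4)-\omega(a_1,a_3)=0$, i.e.\ to the self-adjointness of $L_v$ that you yourself derived; so $(\J,\omega)$ is a symplectic JJ algebra. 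Here $I=\mathrm{span}\{a_1,a_2,a_3,a_4\}$ is a non-degenerate ideal, but $I\cdot I^{\perp}=v\cdot I=\mathrm{span}\{a_3,a_4\}\neq 0$ and $I^{\perp}=\mathrm{span}\{v,v'\}$ is not an ideal. Moreover $\J$ has no $2$-dimensional non-degenerate ideal at all: such an ideal either lies in $\mathrm{span}\{a_3,a_4,v'\}$, which is totally isotropic, or equals $\mathrm{span}\{u,v\cdot u\}$ for some $u$ with $\omega(u,v\cdot u)=0$ by self-adjointness of $L_v$. Hence $I$ \emph{is} minimal among nonzero non-degenerate ideals, so no refinement of your choice of $\J_1$ can rescue the argument; nilpotency and self-adjointness only force $L_v|_{\J_1}$ to be a nilpotent $\omega$-self-adjoint operator, and such operators are nonzero in dimension $\ge 4$. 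The same example shows that the lemma as literally stated is delicate in the symplectic case: $\J$ admits no orthogonal decomposition with $r\ge 2$, yet $\J_1=\J$ contains the nontrivial non-degenerate ideal $I$, violating (2). So the symplectic half needs either a weakening of (2) to indecomposability or an argument genuinely different from the quadratic one; the route you propose cannot be completed.
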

\begin{defn}
In the above Lemma, if $r=1$, $\J$ is called \emph{indecomposable (or irreducible)}.  Otherwise, $\J$ is called \emph{decomposable (or reducible)}.
\end{defn}
As in Lie \cite{Lie} and Lie superalgebra cases \cite{M1}, the theory of double extensions is important tool to classify symplectic (or pseudo-euclidean) JJ algebras inductively. Now we introduce the theory of symplectic double extensions. For more details, the reader is referred to \cite{B4}.

\begin{defn}\cite[Definition 2.4 (2)]{B4}
Let $(\J,\cdot)$ be a JJ algebra and $D$ a linear transformation of $\J$. $D$ is called an \emph{anti-derivation} of $\J$ if
$$D(x\cdot y)=-D(x)\cdot y-x\cdot D(y),\ x,y\in\J.$$
Denote $\mathrm{Ader}(\J)$ the anti-derivation space of $\J$.
\end{defn}
\begin{remark}
 For $x\in\J$, define $L_x:\J\longrightarrow\J$ by $L_x(y)=x\cdot y$ for any $y\in\J$. Then $L_x\in \mathrm{Ader}(\J)$.
\end{remark}

\begin{defn}\cite[Definition 3.2]{B4}
A \emph{special admissible pair} of a symplectic JJ algebra $(\J,\omega)$ is a pair $(D,A_0)$, where $D\in \mathrm{Ader}(\J)$ and $A_0\in \mathrm{Ker}\ D$ such that $D^{2}=-\frac{1}{2}L_{A_{0}}$ and $w(A_0,\mathrm{Im}\ D)=0$.
\end{defn}
\begin{defn}
Suppose that $(\J,\omega)$ is a symplectic JJ algebra. Let $f$ be a linear transformation of $\J$. Then the linear transformation $f^{\ast}$ of $\J$, defined by
$$\omega(f(x),y)=\omega(x,f^{\ast}(y)),\quad \forall x,y\in \J,$$
is called the \emph{adjoint} of $f$ with respect to $\omega$.
\end{defn}

\begin{thm}\cite[Theorem 3.1]{B4}\label{d1}
Let $(\J,\cdot, \omega)$ be a symplectic JJ algebra and $(D,A_0)$ a special admissible pair of $\J$. Then the space $\widetilde{\J}=\F e\oplus\J\oplus\F e^{\ast}$, endowed with the products
$$e^{2}=A_0,\ e\star x=D(x)+\frac{1}{2}\omega(A_0,x)e^{\ast},\ x\star y=x\cdot y+\omega((D-D^{\ast})(x),y)e^{\ast},$$
 where $D^{\ast}$ is the adjoint of $D$ with respect to $\omega$ and $x,y\in\J$,
  is a JJ algebra. Moreover, the skew-symmetric bilinear form $\widetilde{\omega}$ defined by $$\widetilde{\omega}|_{\J\times\J}=\omega,\ \widetilde{\omega}(e,e^{\ast})=1,$$
 is a symplectic form on $(\widetilde{\J},\star)$. The symplectic JJ algebra $(\widetilde{\J},\star, \widetilde{\omega})$ is called a symplectic double extension of
$(\J,\cdot, \omega)$ (by means of $(D,A_0)$).
\end{thm}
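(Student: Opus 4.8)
The plan is first to pin down the full multiplication and the full bilinear form, since the statement specifies them only partially. I would declare $e^{\ast}$ to lie in the annihilator, i.e.\ $e^{\ast}\star\widetilde{\J}=0$, and extend $\widetilde{\omega}$ by $\widetilde{\omega}(e,e)=\widetilde{\omega}(e^{\ast},e^{\ast})=0$ and $\widetilde{\omega}(e,\J)=\widetilde{\omega}(e^{\ast},\J)=0$, so that the Gram matrix of $\widetilde{\omega}$ is block-diagonal, consisting of the $\omega$-block together with the hyperbolic plane on $\F e\oplus\F e^{\ast}$. With these conventions fixed, the theorem splits into three verifications: commutativity of $\star$, the Jacobi identity for $\star$, and the fact that $\widetilde{\omega}$ is a non-degenerate skew-symmetric symplectic form for $(\widetilde{\J},\star)$.

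Commutativity is immediate on the $\J$-part and on any product involving $e^{\ast}$; the only point needing an argument is that the coefficient of $e^{\ast}$ in $x\star y$ is symmetric, i.e.\ $\omega((D-D^{\ast})x,y)=\omega((D-D^{\ast})y,x)$. I would deduce this from the defining relation $\omega(f(x),y)=\omega(x,f^{\ast}(y))$ of the adjoint together with $(D^{\ast})^{\ast}=D$ and the skew-symmetry of $\omega$, which turns the left side into the right side after transferring $D$ and $D^{\ast}$ across $\omega$.

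For the Jacobi identity I would argue by multilinearity and reduce to checking the Jacobiator $J(a,b,c)=a\star(b\star c)+b\star(c\star a)+c\star(a\star b)$ on the four essentially different triples $(e,e,e)$, $(e,e,x)$, $(e,x,y)$ and $(x,y,z)$ with $x,y,z\in\J$ (any triple containing $e^{\ast}$ is killed by centrality). In each case the $\J$-component of $J$ vanishes using the Jacobi identity of $\J$ and the anti-derivation rule $D(x\cdot y)=-Dx\cdot y-x\cdot Dy$, while the scalar coefficient of $e^{\ast}$ is where the structure of the special admissible pair is consumed: the triples $(e,e,e)$ and $(e,e,x)$ use $A_0\in\mathrm{Ker}\,D$, $\omega(A_0,\mathrm{Im}\,D)=0$ and $D^{2}=-\tfrac{1}{2}L_{A_0}$, whereas the triples $(e,x,y)$ and $(x,y,z)$ require the symplectic identity $\omega(a\cdot b,c)+\omega(b\cdot c,a)+\omega(c\cdot a,b)=0$ for $\omega$. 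The hard part is the purely $\J$-valued triple $(x,y,z)$: its $e^{\ast}$-coefficient is $\sum_{\mathrm{cyc}}\omega((D-D^{\ast})x,y\cdot z)$, and I would make it collapse by rewriting each $\omega(Dx,y\cdot z)$ via the symplectic identity applied to $(Dx,y,z)$ and matching the resulting terms against the $D^{\ast}$-contributions, which I first convert through $\omega(D^{\ast}x,w)=\omega(x,Dw)$ and the anti-derivation rule; the nine terms then cancel in three groups of three. This bookkeeping, together with the $(e,x,y)$ case where one combines $D^{2}=-\tfrac{1}{2}L_{A_0}$ with the symplectic identity applied to $(A_0,x,y)$, is the main obstacle.

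Finally, for $\widetilde{\omega}$ non-degeneracy is automatic from the block-diagonal Gram matrix, and skew-symmetry is built in. The symplectic compatibility $\sum_{\mathrm{cyc}}\widetilde{\omega}(a\star b,c)=0$ I would again check on the same four triples; here $\widetilde{\omega}(\,\cdot\,,e)$ extracts precisely the $e^{\ast}$-coefficient of a product, so the very terms $\tfrac{1}{2}\omega(A_0,x)$ and $\omega((D-D^{\ast})x,y)$ appearing in the definition of $\star$ are exactly what is needed to cancel the $\omega(Dx,y)$-type contributions, using once more the adjoint identity and $\omega(A_0,\mathrm{Im}\,D)=0$. Assembling the three verifications yields that $(\widetilde{\J},\star,\widetilde{\omega})$ is a symplectic JJ algebra.
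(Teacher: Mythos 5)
Your verification is correct: the conventions you fix ($e^{\ast}$ annihilating $\widetilde{\J}$, $\widetilde{\omega}$ extended by zero off the stated entries), the four-triple case analysis, and the way each hypothesis of the special admissible pair is consumed (in particular using $\omega(D^{\ast}x,Dy)=\omega(x,D^{2}y)=-\tfrac12\omega(x,A_0\cdot y)$ together with the symplectic identity at $A_0$ for the $(e,x,y)$ triple, and the anti-derivation rule plus the symplectic identity for the $(x,y,z)$ triple) all check out. Note, however, that the paper itself gives no proof of this statement — it is quoted verbatim from \cite[Theorem 3.1]{B4} — so there is no internal argument to compare against; your direct computation is the standard one and is what the cited source carries out.
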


\begin{thm}\cite[Theorem 3.3]{B4}\label{d2}
Let $(\widetilde{\J},\star, \widetilde{\omega})$ be a symplectic JJ algebra such that $\widetilde{\J}\neq 0$. Then $(\widetilde{\J},\star, \widetilde{\omega})$ is a symplectic double extension of a symplectic JJ algebra $(\J,\cdot, \omega)$.
\end{thm}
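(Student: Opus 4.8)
The plan is to prove the converse of Theorem \ref{d1} by \emph{symplectic reduction} along a central isotropic line. The starting point is the fact, recalled in the introduction, that every JJ algebra is nilpotent; hence the annihilator $\mathrm{Ann}(\widetilde{\J})=\{z\in\widetilde{\J}: z\star\widetilde{\J}=0\}$ of the nonzero algebra $\widetilde{\J}$ is nonzero. I would pick any nonzero $e^{\ast}\in\mathrm{Ann}(\widetilde{\J})$; then $\F e^{\ast}$ is a central ideal, and it is automatically isotropic because $\widetilde{\omega}$ is skew-symmetric (so $\widetilde{\omega}(e^{\ast},e^{\ast})=0$). This single line $\F e^{\ast}$ is the piece to be peeled off.

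Next I would pass to the orthogonal complement $(\F e^{\ast})^{\perp}$, which has codimension one since $\widetilde{\omega}$ is non-degenerate. Using the symplectic cocycle identity $\widetilde{\omega}(x\star y,z)+\widetilde{\omega}(y\star z,x)+\widetilde{\omega}(z\star x,y)=0$ together with the centrality of $e^{\ast}$, one checks that $\widetilde{\omega}(e^{\ast},x\star y)=0$ for all $x,y$, so that $\widetilde{\J}\star\widetilde{\J}\subseteq(\F e^{\ast})^{\perp}$; in particular $(\F e^{\ast})^{\perp}$ is an ideal containing $\F e^{\ast}$. Because $\F e^{\ast}$ is exactly the radical of the restriction $\widetilde{\omega}|_{(\F e^{\ast})^{\perp}}$, the form descends to a non-degenerate skew-symmetric form $\omega$ on the quotient $\J:=(\F e^{\ast})^{\perp}/\F e^{\ast}$, and the product $\star$ descends to a commutative product $\cdot$ satisfying the Jacobi identity; thus $(\J,\cdot,\omega)$ is a symplectic JJ algebra of dimension $\dim\widetilde{\J}-2$.

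It then remains to reconstruct the double-extension data. I would fix a vector $e$ with $\widetilde{\omega}(e,e^{\ast})=1$ and a subspace of $(\F e^{\ast})^{\perp}$ mapping isomorphically onto $\J$, giving a splitting $\widetilde{\J}=\F e\oplus\J\oplus\F e^{\ast}$. I would take $A_0$ to be the $\J$-component of $e\star e$ and define $D:\J\longrightarrow\J$ as the map induced by left multiplication $x\mapsto e\star x$ followed by projection to $\J$. The heart of the proof, and the step I expect to be the main obstacle, is verifying that $(D,A_0)$ is a \emph{special admissible pair}: that $D$ is an anti-derivation of $(\J,\cdot)$, that $A_0\in\mathrm{Ker}\ D$, that $D^{2}=-\tfrac{1}{2}L_{A_0}$, and that $\omega(A_0,\mathrm{Im}\ D)=0$. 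Each identity is forced by expanding the JJ axioms and the cocycle condition on triples drawn from $\{e\}\cup\J\cup\{e^{\ast}\}$ and reading off the $\F e^{\ast}$-components; the bookkeeping is delicate because products may acquire $\F e^{\ast}$-corrections, and one may have to adjust the choice of $e$ and of the complement to kill spurious components so that the formulas of Theorem \ref{d1} are matched exactly. Once the pair is shown to be special admissible and the induced products are checked to coincide with $e^{2}=A_0$, $e\star x=D(x)+\tfrac{1}{2}\omega(A_0,x)e^{\ast}$, and $x\star y=x\cdot y+\omega((D-D^{\ast})(x),y)e^{\ast}$, the algebra $\widetilde{\J}$ is exhibited as the symplectic double extension of $(\J,\cdot,\omega)$ by $(D,A_0)$, completing the proof.
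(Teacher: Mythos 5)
This statement is quoted from \cite[Theorem 3.3]{B4} and the paper gives no proof of it, so there is nothing in the text to compare your argument against; I can only measure it against the standard proof of such ``converse double extension'' theorems, which is exactly the reduction you describe. Your outline is structurally correct and the first two stages are genuinely complete: nilpotency of $\widetilde{\J}$ (from \cite{D-A}) gives $\mathrm{Ann}(\widetilde{\J})\neq 0$; the cocycle identity with $z=e^{\ast}$ central gives $\widetilde{\J}\star\widetilde{\J}\subseteq(\F e^{\ast})^{\perp}$, so $(\F e^{\ast})^{\perp}$ is an ideal whose radical with respect to $\widetilde{\omega}$ is exactly $\F e^{\ast}$ by non-degeneracy, and the quotient inherits both structures. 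For the reconstruction, the clean choice is to take $e$ isotropic with $\widetilde{\omega}(e,e^{\ast})=1$ and to use $(\F e\oplus\F e^{\ast})^{\perp}$ as the section; then the cocycle identity on $(x,y,e)$ forces the $e^{\ast}$-correction of $x\star y$ to be $\omega((D-D^{\ast})(x),y)$, on $(e,e,x)$ it forces the coefficient $\tfrac{1}{2}\omega(A_0,x)$ in $e\star x$, and on $(e,e,e)$ it kills the $e^{\ast}$-component of $e^{2}$ (here characteristic $\neq 3$ is used), while the Jacobi identity on $(e,x,y)$, $(e,e,x)$ and $(e,e,e)$ yields respectively that $D$ is an anti-derivation, $D^{2}=-\tfrac{1}{2}L_{A_0}$, and $D(A_0)=0$.

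The one place where your sketch stops short of a proof is the remaining axiom $\omega(A_0,\mathrm{Im}\ D)=0$, equivalently $D^{\ast}(A_0)=0$. Unlike the other conditions, this one does not drop out of a single application of the Jacobi or cocycle identity to a triple from $\{e\}\cup\J\cup\{e^{\ast}\}$ --- every natural attempt (e.g.\ the cocycle identity on $(e\star e,\,x,\,e)$) returns a tautology once $D(A_0)=0$ is substituted. So your own caveat that ``one may have to adjust the choice of $e$ and of the complement'' is pointing at the actual content of this step, not at routine bookkeeping: either one must show the condition holds for a suitable (re)choice of $e^{\ast}$, $e$ and the section, or one must invoke the precise form of the admissibility conditions used in \cite{B4}. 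As a plan the proposal is sound and follows the intended route, but this verification is the part that would need to be written out before the argument counts as a proof.
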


\begin{thm}
Over an algebraically closed field of characteristic not 2 or 3, the classification of nontrivial indecomposable symplectic and pseudo-euclidean JJ algebras up to dimension 5 is as follows:

(1) there is only one nontrivial  indecomposable symplectic JJ algebra: $\J_{1,4}$. A symplectic form can be given by
$$\omega_{\J_{1,4}}=\left(
  \begin{array}{cccc}
    0 & 0 & 0 & 1 \\
    0 & 0 & 2 & 0 \\
    0 & -2 & 0 & 0 \\
    -1 & 0& 0 & 0 \\
  \end{array}
\right);$$

(2) there are  two nontrivial indecomposable  pseudo-euclidean JJ algebras: $\J_{1,2}$, $\J_{1,4}$.
Pseudo-euclidean forms can be  given by
$$B_{\J_{1,2}}=\left(
  \begin{array}{cc}
    0 & 1 \\
    1 & 0 \\
  \end{array}
\right),\
B_{\J_{1,4}}=\left(
  \begin{array}{cccc}
    0 & 0 & 0 & 1 \\
    0 & 0 & 1 & 0 \\
     0& 1 & 0 & 0 \\
    1 & 0 & 0 & 0 \\
  \end{array}
\right).$$
\end{thm}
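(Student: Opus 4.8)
The plan is to combine three ingredients: the classification of JJ algebras up to dimension $5$ recalled above, the orthogonal decomposition of the previous Lemma (which reduces everything to \emph{indecomposable} pieces), and two structural constraints that any invariant form imposes on the pair $(\J^2,\mathrm{Ann}(\J))$, where $\mathrm{Ann}(\J)=\{z\in\J:z\cdot\J=0\}$. The proof is then a finite inspection of the classification list, with the structural constraints discarding almost every candidate before any real computation. I would first establish those constraints. For a pseudo-euclidean form $B$, invariance $B(x\cdot y,z)=B(x,y\cdot z)$ together with non-degeneracy gives $z\in(\J^2)^{\perp}\iff y\cdot z=0\ \forall y\iff z\in\mathrm{Ann}(\J)$; hence $\mathrm{Ann}(\J)=(\J^2)^{\perp}$ and $\dim\mathrm{Ann}(\J)+\dim\J^2=\dim\J$. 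For a symplectic form $\omega$ the same device only yields $\omega(\J^2,\mathrm{Ann}(\J))=0$, i.e. $\mathrm{Ann}(\J)\subseteq(\J^2)^{\perp}$, and the Corollary forces the dimension to be even.

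Next I would treat the symplectic case. By the Corollary only dimensions $2$ and $4$ occur. In dimension $2$ the only nontrivial algebra is $\J_{1,2}$, and substituting $x=y=z=e_1$ into the symplectic identity gives $3\,\omega(e_2,e_1)=0$; since $\mathrm{char}\,\F\neq3$ this forces $e_2\in\mathrm{rad}\,\omega$, so $\J_{1,2}$ carries no symplectic form and a $2$-dimensional symplectic JJ algebra must be abelian. In dimension $4$ I would run through the five algebras $\J_{1,2}\oplus\F^2,\ \J_{1,3}\oplus\F,\ \J_{1,2}^{2},\ \J_{1,4},\ \J_{2,4}$: writing out the symplectic identity on basis triples, one checks that in every case except $\J_{1,4}$ the element $e_2=e_1^{2}$ is forced into $\mathrm{rad}\,\omega$ (so no symplectic form exists), whereas for $\J_{1,4}$ the triple $(e_1,e_1,e_3)$ produces the pairing $\omega(e_2,e_3)=2\,\omega(e_4,e_1)$, and the displayed matrix $\omega_{\J_{1,4}}$ satisfies the identity and is non-degenerate. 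Indecomposability of $\J_{1,4}$ then follows: a non-degenerate ideal of a symplectic space is even-dimensional, so any nontrivial orthogonal splitting would write $\J_{1,4}$ as a sum of two $2$-dimensional symplectic ideals, each abelian by the dimension-$2$ step, making $\J_{1,4}$ abelian, a contradiction.

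For the pseudo-euclidean case I would add one reduction: an indecomposable algebra satisfies $\mathrm{Ann}(\J)\subseteq\J^2$. Indeed $B|_{\mathrm{Ann}(\J)}$ has radical $\mathrm{Ann}(\J)\cap\mathrm{Ann}(\J)^{\perp}=\mathrm{Ann}(\J)\cap\J^2$, so if $\mathrm{Ann}(\J)\not\subseteq\J^2$ then, the field being algebraically closed with $\mathrm{char}\neq2$, there is an anisotropic $z\in\mathrm{Ann}(\J)\setminus\J^2$; then $\F z$ is a non-degenerate central ideal and $\J=\F z\perp(\F z)^{\perp}$ is a proper orthogonal splitting. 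Combining $\dim\mathrm{Ann}(\J)+\dim\J^2=\dim\J$ with $\mathrm{Ann}(\J)\subseteq\J^2$ and scanning the list, the only nontrivial algebras of dimension $\le5$ passing both tests are $\J_{1,2}$, $\J_{1,4}$, and $\J_{1,2}^{2}$; every dimension-$5$ algebra fails, since each either violates the dimension count or admits a central direction outside $\J^2$. I would dispose of $\J_{1,2}^{2}$ by hand: for any invariant non-degenerate $B$ one has $B(e_1,e_2),B(e_3,e_4)\neq0$, and replacing $e_3$ by $e_3-\tfrac{B(e_1,e_3)}{B(e_1,e_2)}e_2$ makes the two $\J_{1,2}$-summands $B$-orthogonal, so $\J_{1,2}^{2}$ is always decomposable. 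For $\J_{1,2}$ and $\J_{1,4}$ I would verify that the displayed symmetric matrices are invariant and non-degenerate and note that both are algebra-indecomposable (the inclusion $\mathrm{Ann}\subseteq\J^2$ rules out an abelian factor, and a $2{+}2$ split of $\J_{1,4}$ would give $\J_{1,4}\cong\J_{1,2}^{2}$, excluded by the classification); since a pseudo-euclidean splitting is in particular an algebra splitting into ideals that multiply to zero, both are pseudo-euclidean-indecomposable.

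The main obstacle is not any single computation but the verification, for the handful of algebras surviving the two numerical filters, that \emph{every} invariant form on them is decomposable rather than merely that some form is: the equality $\dim\mathrm{Ann}+\dim\J^2=\dim\J$ and the inclusion $\mathrm{Ann}\subseteq\J^2$ are necessary but not sufficient for indecomposability, and eliminating $\J_{1,2}^{2}$ (and the dimension-$5$ decomposables) requires producing an explicit orthogonal ideal, which is where the argument is least mechanical. A secondary point to watch is the meaning of ``classification'': beyond exhibiting the representative forms, one should check they are the only ones up to isometric isomorphism, which over an algebraically closed field amounts to showing that the automorphism group of each algebra acts transitively on its non-degenerate invariant forms.
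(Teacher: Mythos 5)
Your proposal is correct, but it takes the route the paper only mentions in passing (``a direct proof can be given by checking the above classification'') rather than the one the paper actually carries out. The paper's proof works through double extensions: by Theorems \ref{d1} and \ref{d2} every nonzero symplectic JJ algebra is a double extension $\F e\oplus\J\oplus\F e^{\ast}$ of a lower-dimensional one, so the author builds all symplectic JJ algebras of dimension $\le 4$ from the $0$- and $2$-dimensional cases, splits into the cases $D=0$ and $D\neq 0$ for the special admissible pair $(D,A_0)$, and exhibits explicit changes of basis identifying every nontrivial result with $\J_{1,4}$; the pseudo-euclidean half is left to direct inspection. You instead filter the classification list by structural invariants: for pseudo-euclidean forms the identity $\mathrm{Ann}(\J)=(\J^2)^{\perp}$, hence $\dim\mathrm{Ann}(\J)+\dim\J^2=\dim\J$, together with $\mathrm{Ann}(\J)\subseteq\J^2$ for indecomposables; for symplectic forms the evenness of the dimension and the relation $3\,\omega(e_1^2,e_1)=0$ forcing squares into the radical. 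I checked your filters against the list and they do eliminate exactly the algebras you claim (in particular every dimension-$5$ algebra fails one of the two pseudo-euclidean tests, and in dimension $4$ only $\J_{1,4}$ survives the symplectic check), and your by-hand elimination of $\J_{1,2}^{2}$ and the indecomposability arguments are sound. What each approach buys: yours is self-contained on the pseudo-euclidean side, which the paper essentially omits, and makes the mechanism (the $\mathrm{Ann}$--$\J^2$ duality) visible; the paper's double-extension argument does not presuppose the classification list and generalizes to an inductive scheme in higher dimensions, which is why the author showcases it. Two small remarks: your displayed relation $\omega(e_2,e_3)=2\,\omega(e_4,e_1)$ for $\J_{1,4}$ has the wrong sign --- the triple $(e_1,e_1,e_3)$ gives $\omega(e_2,e_3)+2\,\omega(e_4,e_1)=0$, i.e.\ $\omega(e_2,e_3)=2\,\omega(e_1,e_4)$, which is what the displayed matrix actually satisfies; and your closing worry about uniqueness of the forms up to i-isomorphism is not a gap relative to the paper, whose statement and proof only classify the underlying algebras and exhibit one form on each.
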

\begin{proof}
A direct proof can be given by checking the above classification. Here we give a proof by the method of double extensions. As an example, we classify symplectic JJ algebras up to dimension 5. Note that the dimension of  a symplectic JJ algebra is even. It is sufficient to compute double extensions of 0-dimensional and 2-dimensional symplectic JJ algebras respectively. By Theorems \ref{d1} and \ref{d2}, we obtain that there is only the trivial symplectic JJ algebra $\F^{2}$ in dimension 2. Suppose that $\F^{2}=(\mathrm{span}\{e_1,e_2\},\omega_{\F^{2}})$ is the 2-dimensional symplectic JJ algebra with the trivial multiplication, where
$\omega_{\F^{2}}=\left(
                   \begin{array}{cc}
                     0 & 1 \\
                     -1 & 0 \\
                   \end{array}
                 \right)$.
Let $(D,A_0)$ be a special admissible pair of $\F^{2}$ and $\widetilde{\J}=\F e\oplus\J\oplus\F e^{\ast}$ be a double extension of $(\F^{2},\omega_{\F^{2}})$ by means of $(D,A_0)$. Then we have $D^{2}=0$.
It is sufficient to prove that all double extensions of $(\F^{2},\omega_{\F^{2}})$ by means of $(D,A_0)$ are isomorphic to $\J_{1,4}$ in the following cases.

Case 1. $D=0$. Then $D^{\ast}=0$. Set $A_0=ke_1+le_2$, where $k,l\in\F$. If $k=l=0$, then $\widetilde{\J}$ is the 4-dimensional trivial JJ algebra. If $k\neq 0$, then an isomorphism to $\J_{1,4}$ can be given by the change of basis:
$$e'_{1}=\frac{1}{\sqrt{k}}e,\ e'_2=e_1+\frac{l}{k}e_2,\ e'_3=e_2,\ e'_4=\frac{\sqrt{k}}{2}e^{\ast}.$$
Otherwise, we have $k=0,l\neq 0$. In this case, an isomorphism can be given by the change of basis:
$$e'_1=e,\ e'_2=le_2,\ e'_3=e_1,\ e'_4=-\frac{l}{2}e^{\ast}.$$

Case 2. $D\neq 0$. Set $D(e_1)=a^{1}_{1}e_1+a^{1}_{2}e_2$, $D(e_2)=a^{2}_{1}e_1+a^{2}_{2}e_2$ and $A_0=ke_1+le_2$. If $a^{1}_{2}=a^{2}_{1}=0$, we get $D=0$ from $D^{2}=0$. Without loss of generality, we can suppose $a^{1}_{2}\neq 0$. By Theorem \ref{d1}, we get the products:
$$e^{2}=\frac{la^{1}_{1}}{a^{1}_{2}}e_1+le_2,\ e\star e_{1}=a^{1}_{1}e_1+a^{1}_{2}e_2-\frac{l}{2}e^{\ast},\ e\star e_{2}=-\frac{(a^{1}_{1})^{2}}{a^{1}_{2}}e_1-a^{1}_{1}e_2+\frac{la^{1}_{1}}{2a^{1}_{2}}e^{\ast},$$
$$e_{1}^{2}=-2a^{1}_{2}e^{\ast},\ e_{1}\star e_{2}=2a^{1}_{1}e^{\ast},\ e_{2}^{2}=-\frac{2(a^{1}_{1})^{2}}{a^{1}_{2}}e^{\ast}.$$
Moreover, an isomorphism to $\J_{1,4}$ can be given by the change of basis:
$$e'_1=e_1,\ e'_2=-2a^{1}_{2}e^{\ast},\ e'_3=e-\frac{l}{2a^{1}_{2}}e_1,\ e'_4=a^{1}_{1}e_{1}+a^{1}_{2}e_2+\frac{l}{2}e^{\ast}.$$
\end{proof}

\begin{remark}
As Leibniz algebras, the pseudo-euclidean structures of $\J_{1,2}$ and $\J_{1,4}$ are also given in \cite{M2}.
\end{remark}
\begin{remark}
As examples, symplectic and pseudo-euclidean structures of 4-dimensional JJ algebras are studied in \cite{B4}, but there is a mistake of symplectic form of $\J_{1,4}$ \cite[Example 3.1]{B4}.
In particular, it is proved that $\J_{1,4}$ is the unique nontrivial JJ algebra in dimension 4 which both has a pseudo-euclidean  and symplectic structure \cite[Remark 4.1]{B4}. From our result, we obtain that $\J_{1,4}$ is the unique nontrivial pseudo-euclidean  and symplectic JJ algebra up to dimension 5.
\end{remark}
\begin{remark}
Recall a  Leibniz algebra is called \emph{metric} if there is a pseudo-euclidean form on it \cite[Definition 2.5]{M2}. In the classification, the Leibniz 2-cohomology and metric Leibniz deformations of $\J_{1,2}$, $\J_{1,2}\oplus \C$, $\J_{1,2}\oplus \C^{2}$, $\J_{1,2}^{2}$, $\J_{1,4}$,
$\J_{1,2}\oplus \C^{3}$, $\J_{1,2}^{2}\oplus \C$ and $\J_{1,4}\oplus \C$ have been studied over the complex field $\C$ \cite{M2}. We recall the results.
$$
\begin{array}{c|c|c|c|c|c|c|c|c}
\hline
  \mathrm{Dimension} & 2 & 3 & 4 & 4 & 4 & 5 & 5 & 5\\
\hline
  \mathrm{Algebra} & \mathfrak{J}_{1,2} & \mathfrak{J}_{1,2}\oplus \C & \mathfrak{J}_{1,2}\oplus\C^{2} & \mathfrak{J}_{1,2}^{2} & \mathfrak{J}_{1,4} & \mathfrak{J}_{1,2}\oplus\C^{3} & \mathfrak{J}_{1,2}^{2}\oplus\C & \mathfrak{J}_{1,4}\oplus\C\\
\hline
  \mathrm{dim}\ \mathrm{HL}^{2} & 2 & 8 & 27 & 8 & 9 & 64 & 29 &30 \\
\hline
\end{array}
$$
There are only trivial metric Leibniz deformations in dimension 2 and 3. The pictures of metric Leibniz deformations in dimension 4 and 5 are as follows:

in dimension 4,
$$\xymatrix{
       \J_{1,2}\oplus\C^{2} \ar[d]^{}          &             \\
   \J_{1,4} \ar[d]^{}  & \\
\J_{1,2}^{2}  & }$$

in dimension 5,
$$\xymatrix{
       \J_{1,2}\oplus\C^{3} \ar[d]^{}          &             \\
   \J_{1,4}\oplus\C \ar[d]^{}  & \\
\J_{1,2}^{2}\oplus\C   & }$$
where the down arrows show deformations. In this paper, we compute JJ deformations  for JJ algebras of dimension $\leq 5$ by computing JJ 2-cohomology with coefficients in adjoint modules.
\end{remark}
\section{Cohomology and Massey products}

Here we give the definition of 2-cohomology of JJ algebras following the Lie algebra case described by Fialowski \cite{exma2}.
Since we only need the cohomology with coefficients in the adjoint representation,  we restrict our definition to the adjoint cohomology case.

Suppose that $(\mathfrak{J}, \cdot)$ is a JJ algebra. Define the $n$-cochain space $\S^{n}(\mathfrak{J},\mathfrak{J})$ by the space of symmetric $n$-linear mappings $\mathfrak{J}\times\cdots\times \mathfrak{J}\rightarrow \mathfrak{J}$.
 We define the complex
$$\S^{1}(\mathfrak{J},\mathfrak{J})\stackrel{\mathrm{d}}{\longrightarrow} \S^{2}(\mathfrak{J},\mathfrak{J})\stackrel{\mathrm{d}}{\longrightarrow} \S^{3}(\mathfrak{J},\mathfrak{J}),$$
where the differential is defined by
\begin{eqnarray*}
&\mathrm{d}\varphi(x,y)=\varphi(x\cdot y)-x\cdot\varphi(y)-y\cdot\varphi(x), \\
&\mathrm{d}\varphi(x,y,z)=\varphi(x,y\cdot z)+\varphi(y,z\cdot x)+\varphi(z,x\cdot y)+x\cdot\varphi(y,z)+y\cdot\varphi(z,x)+z\cdot\varphi(x,y).
\end{eqnarray*}
 In order to describe deformations of JJ algebras, we define  \emph{2-cohomology} by
$\mathrm{H}^{2}(\mathfrak{J},\mathfrak{J})=\mathrm{Z}^{2}(\mathfrak{J},\mathfrak{J})/\mathrm{B}^{2}(\mathfrak{J},\mathfrak{J})$,
where
\begin{eqnarray*}
&& \mathrm{Z}^{2}(\mathfrak{J},\mathfrak{J})=\mathrm{Ker}\ (\mathrm{d}: \S^{2}(\mathfrak{J},\mathfrak{J})\rightarrow \S^{3}(\mathfrak{J},\mathfrak{J})), \\
 &&\mathrm{B}^{2}(\mathfrak{J},\mathfrak{J})=\mathrm{Im}\ (\mathrm{d}: \S^{1}(\mathfrak{J},\mathfrak{J})\rightarrow \S^{2}(\mathfrak{J},\mathfrak{J})).
\end{eqnarray*}
A cochain is called a \emph{cocycle} (resp. \emph{coboundary}) if it is in $\mathrm{Z}^{2}(\mathfrak{J},\mathfrak{J})$
 $(\mathrm{resp}.\ \mathrm{B}^{2}(\mathfrak{J},\mathfrak{J}))$. From now on, we write $\mathbb{H}$ by $\mathrm{H}^{2}(\J,\J)$ for a JJ algebra $\J$.

For $\varphi\in \S^{p}(\mathfrak{J},\mathfrak{J}),\psi\in \S^{q}(\mathfrak{J},\mathfrak{J})$, we define $\varphi\psi\in \S^{p+q-1}(\mathfrak{J},\mathfrak{J})$ by
\begin{eqnarray*}
(\varphi\psi)(x_{1},\ldots,x_{p+q-1}) &=& \sum\limits_{1\leq i_1<\cdots<i_{p-1}\leq p+q-1}\varphi(x_{i_1},\ldots,x_{i_{p-1}}, \\
   &&\psi(x_1,\ldots,\widehat{x}_{i_{1}},
\ldots,\widehat{x}_{i_{p-1}},\ldots,x_{p+q-1})).
\end{eqnarray*}
Then the \emph{Massey 2-product or Massey square} of $\varphi$ and $\psi$ is defined by $[\varphi,\psi]=\varphi\psi-(-1)^{(p-1)(q-1)}\psi\varphi$. If we define  $\mathcal{S}^{p}=\mathrm{S}^{p+1}(\J,\J)$, then the Massey products make $\mathcal{S}^{\bullet}=\bigoplus_{p}\mathcal{S}^{p}$ a $\mathbb{Z}$-graded algebra.
For a cochain $\varphi$, we denote the $\mathbb{Z}$-degree of $\varphi$ by $\|\varphi\|$.
By a direct computation, we have the following Theorem.
\begin{thm}\label{a}
For $\varphi,\psi,\omega\in\mathcal{S}^{\bullet}$,

(1) $[\varphi,\psi]=-(-1)^{\|\varphi\|\|\psi\|}[\psi,\varphi];$

(2) $\circlearrowleft_{\varphi,\psi,\omega}(-1)^{\|\varphi\|\|\omega\|}[\varphi,[\psi,\omega]]=0$;

(3) if $\|\varphi\|=1$, then $\mathrm{d}\varphi=[\varphi_0,\varphi]$, where $\varphi_0$ is defined by the multiplication of $\mathfrak{J}$.
\end{thm}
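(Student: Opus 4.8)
The plan is to treat the three assertions separately, since only (2) requires genuine combinatorial work; parts (1) and (3) are formal consequences of the definitions. For (2), the graded Jacobi identity, I would not attack the cyclic sum head-on but instead establish that the insertion product $\varphi\psi$ is graded right-symmetric (a graded pre-Lie product) and then invoke the standard fact that the graded commutator of such a product is a graded Lie bracket.

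For (1) I would argue purely from the definition of the Massey product, using no property of the product itself. Writing $[\varphi,\psi]=\varphi\psi-(-1)^{\|\varphi\|\|\psi\|}\psi\varphi$ and expanding $-(-1)^{\|\varphi\|\|\psi\|}[\psi,\varphi]$, the two mixed terms recombine via $(-1)^{2\|\varphi\|\|\psi\|}=1$ to reproduce $[\varphi,\psi]$. For (3), with $\varphi_0(x,y)=x\cdot y$ and $\|\varphi_0\|=\|\varphi\|=1$, the sign $(-1)^{(p-1)(q-1)}$ equals $-1$, so $[\varphi_0,\varphi]=\varphi_0\varphi+\varphi\varphi_0$. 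Expanding the two insertion products on $(x_1,x_2,x_3)$ yields, on the one hand, the three terms $x_i\cdot\varphi(\text{other two})$ and, on the other, the three terms $\varphi(x_i,\text{product of the other two})$; after using commutativity of $\cdot$ and the symmetry of $\varphi$ these are exactly the six summands of $\mathrm{d}\varphi(x_1,x_2,x_3)$. This is a short direct check.

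The heart of the proof is (2). I would first expand the double insertions $(\varphi\psi)\omega$ and $\varphi(\psi\omega)$ for $\varphi\in\S^{p}(\J,\J)$, $\psi\in\S^{q}(\J,\J)$, $\omega\in\S^{r}(\J,\J)$, and classify the resulting terms as \emph{disjoint} (where the outputs of $\psi$ and $\omega$ are fed into separate arguments of $\varphi$) or \emph{nested} (where the output of $\omega$ is fed into an argument of $\psi$, which in turn sits inside $\varphi$). The product $\varphi(\psi\omega)$ produces only nested terms, and I would check that these match, with their multiplicities, exactly the nested part of $(\varphi\psi)\omega$; hence the associator $(\varphi\psi)\omega-\varphi(\psi\omega)$ retains only the disjoint terms. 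In a disjoint term $\psi$ and $\omega$ play interchangeable roles, so the associator is graded symmetric under $\psi\leftrightarrow\omega$, i.e. it equals $(-1)^{\|\psi\|\|\omega\|}$ times itself with $\psi$ and $\omega$ swapped. This graded right-symmetry is the pre-Lie identity, and from it the graded Jacobi identity for $[\,\cdot\,,\cdot\,]$ follows by summing the three cyclic copies of the associator-symmetry relation and cancelling in the standard way.

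The main obstacle is precisely this pre-Lie verification. Because $\varphi\psi$ is a sum over all ways of distributing the $p+q-1$ arguments between the direct slots of $\varphi$ and the slots of $\psi$, the double insertion involves nested shuffle sums, and one must track the selection coefficients and the degree signs attached to each insertion carefully to confirm both that the nested contributions cancel against $\varphi(\psi\omega)$ and that the combinatorial multiplicities of the surviving disjoint terms match after the $\psi\leftrightarrow\omega$ swap. This computation is the symmetric (commutative) transposition of the Nijenhuis--Richardson calculus used in the Lie case described by Fialowski; isolating the nested-versus-disjoint dichotomy is the device that keeps the bookkeeping manageable.
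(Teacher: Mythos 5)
Your treatments of (1) and (3) are correct, and they are exactly the direct computations the paper has in mind: the paper offers no written proof at all, only the remark that the theorem follows ``by a direct computation'', so for those two parts there is nothing to compare. The problem is part (2), and specifically the pre-Lie step on which your whole argument for it rests. Your decomposition of $(\varphi\psi)\omega$ into nested and disjoint terms is right, and the nested terms do cancel against $\varphi(\psi\omega)$ with the correct multiplicities, so the associator $A(\varphi,\psi,\omega)=(\varphi\psi)\omega-\varphi(\psi\omega)$ is indeed the sum of the disjoint terms $\varphi(\ldots,\psi(\ldots),\omega(\ldots))$. But in the symmetric setting there is no source of signs anywhere: the cochains are symmetric and the insertion sum in the paper's definition carries no shuffle signs. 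Hence the disjoint sum is invariant under interchanging $\psi$ and $\omega$ \emph{with no sign}, i.e.\ $A(\varphi,\psi,\omega)=A(\varphi,\omega,\psi)$, an \emph{ungraded} right pre-Lie identity. The graded identity $A(\varphi,\psi,\omega)=(-1)^{\|\psi\|\|\omega\|}A(\varphi,\omega,\psi)$ that you invoke differs from this precisely when $\|\psi\|$ and $\|\omega\|$ are both odd, and it does not hold: in the Nijenhuis--Richardson (alternating) calculus the sign $(-1)^{(q-1)(r-1)}$ is produced jointly by the shuffle signs and the antisymmetry of the outer cochain, and both of those disappear when you transpose to symmetric cochains. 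What your argument actually proves is the ordinary Jacobi identity for the ungraded commutator $\varphi\psi-\psi\varphi$, not statement (2).

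Moreover, this is not a gap you can close with more careful bookkeeping, because statement (2) fails in the stated generality. Take $\varphi=\psi=\omega\in\mathrm{S}^{2}(\mathfrak{J},\mathfrak{J})$, so all degrees equal $1$ and (2) reduces (in characteristic $\neq 3$) to $[\varphi,[\varphi,\varphi]]=0$. But $[\varphi,\varphi]=2\varphi\varphi$ and $[\varphi,\varphi\varphi]=\varphi(\varphi\varphi)-(\varphi\varphi)\varphi=-A(\varphi,\varphi,\varphi)$, which is $-2$ times the sum over the three pairings of $\varphi(\varphi(x_a,x_b),\varphi(x_c,x_d))$; for a one-dimensional space with $\varphi(e,e)=e$ this evaluates to $-12e\neq 0$ on $(e,e,e,e)$, so $[\varphi,[\varphi,\varphi]]=-24e\neq0$. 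Any proof attempt for (2) must therefore break somewhere, and yours breaks exactly at the asserted graded symmetry of the associator. The saving grace is that nothing downstream in the paper uses (2) in this generality: the deformation equations only need the bracket of two elements of $\mathrm{S}^{2}(\mathfrak{J},\mathfrak{J})$, where $[\varphi,\psi]=\varphi\psi+\psi\varphi$, together with parts (1) and (3) --- and those your proposal handles correctly.
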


The Massey squares also can be defined on $\mathrm{H}^\bullet(\J,\J)$ by
$[\overline{\varphi}_1, \overline{\varphi}_2]=\overline{[\varphi_1, \varphi_2]}$ for $\varphi_1, \varphi_2\in \mathrm{Z}^{2}(\J,\J)$.
These Massey squares are in $\mathrm{H}^3(\J,\J)$.
As in Lie case \cite{exma2}, higher order
the Massey products on cohomology spaces are also defined, and those products are also elements of $\mathrm{H}^3$.
They are only defined if all the participating lower order Massey products are trivial. Let us define the third order Massey operation $<\overline{\varphi}_1, \overline{\varphi}_2, \overline{\varphi}_3>$  when the brackets $[\overline{\varphi}_i,\overline{\varphi}_j]$ are all trivial for $i,j \in \{1,2,3\}$ so that there exist cocycles $\varphi_{ij}$ such that $[\varphi_i,\varphi_j]=\mathrm{d}\varphi_{ij}$. The \emph{third order Massey operation (cube)} takes value in the factor space
$$
\mathrm{H}^3(\J,\J)/[\overline{\varphi}_1, \mathrm{H}^2(\J,\J)]+[\overline{\varphi}_2, \mathrm{H}^2(\J,\J)]+[\overline{\varphi}_3, \mathrm{H}^2(\J,\J)],
$$
and is equal to the image of the cohomology class of the cocycle
$$
[\varphi_{12}, \varphi_3]+[\varphi_{23},\varphi_1]+[\varphi_{13}, \varphi_2].
$$
The image of this cohomology class in the factorspace is well-defined.
For higher order Massey operations see \cite{exma2}.

\section{General theory of (formal) deformations}

Here we adopt the general deformation theory for Lie algebras, worked out by Fialowski \cite{F, exma2}, to the category of JJ algebras.
Let $A$ be a commutative algebra over $\F$ with identity, and $\mathfrak{m}_{A}$ a maximal ideal of $A$. We denote by $\varepsilon_{A}: A\rightarrow \F$ the  augmentation
of $A$ according to $\mathfrak{m}_{A}$, satisfying $\varepsilon_{A}(1)=1$  and $\mathrm{Ker}\ \varepsilon_{A}=\mathfrak{m}_{A}$.
\begin{defn}\label{e}
A \emph{deformation} $\lambda$ of a JJ algebra $\J$ with base $(A,\mathfrak{m}_{A})$ is
 a family of JJ algebra structures on the $A$-module $\mathfrak{J}_A=A\otimes_{\F}\mathfrak{J}$ such that
$$\varepsilon\otimes \mathrm{id}_{\J}:\J_{A}\rightarrow \F\otimes\J=\J$$
is a JJ algebra homomorphism.
\end{defn}
\begin{defn}
Two deformations $\lambda$ and $\lambda'$ of a JJ algebra $\J$ with the same base $A$ are called \emph{equivalent} if there exists a JJ algebra isomorphism $\varphi$ such that the following diagram commutes:
$$\xymatrix{
  (\J_A,\lambda) \ar[dr]_{\varepsilon_{A}\otimes \mathrm{id}_{\J}} \ar[r]^{\varphi}
                &  (\J_A,\lambda')\ar[d]^{\varepsilon_{A}\otimes \mathrm{id}_{\J}}  \\
                &\F\otimes\J            }
$$
\end{defn}

\begin{defn}
A deformation with base $A$ is called \emph{local}, if $A$ is local. It is called of order $k$, if $\mathfrak{m}_{A}^{k+1}=0$. In particular, a local deformation of order 1 is called \emph{infinitesimal}.
\end{defn}

Suppose that $A'$ is another commutative algebra with identity over $\F$ with the corresponding augmentation $\varepsilon_{A'}:A'\rightarrow\F$. Let $\varphi:A\rightarrow A'$ be an algebra homomorphism with $\varphi(1)=1$ and $\varepsilon_{A'}\circ \varphi=\varepsilon_{A}$.
An $A$-module structure on $A'$ can be given by
$$aa'=a'\varphi(a),\quad a\in A,a'\in A'.$$
We have
$$A'\otimes_{\F} \J=(A'\otimes_{A}A)\otimes_{\F} \J=A'\otimes_{A}(A\otimes_{\F} \J)=A'\otimes_{A}\J_A.$$
A deformation $\lambda'$ of $\J$ with base $(A',\mathfrak{m}_{A'})$ is given by
$$((a'_1\otimes_A(a_1\otimes x_1))\cdot (a'_2\otimes_A(a_2\otimes x_2)))_{\lambda'}=a'_1a'_2\otimes_{A}((a_1\otimes x_1)\cdot(a_2\otimes x_2))_{\lambda},$$
where $a_1,a_2\in A$, $a'_1,a'_2\in A'$ and $x_1,x_2\in \J$.
\begin{defn}
With the above notations, the deformation $\lambda'$ is called the \emph{push-out} of the deformation $\lambda$ by means of $\varphi$, denoted by $\varphi_{\ast}\lambda$.
\end{defn}
\begin{rem}
If the deformation $\lambda$ is given by
$$((1\otimes x_1)\cdot(1\otimes x_2))_{\lambda}=1\otimes x_1x_2+\sum_{i}m_i\otimes y_i,\quad m_i\in\mathfrak{m}_A, y_i\in\J,$$
then, by definition, the push-out $\varphi_{\ast}\lambda$ is given by
\begin{equation}\label{po}
((1\otimes x_1)\cdot(1\otimes x_2))_{\varphi_{\ast}\lambda}=1\otimes x_1x_2+\sum_{i}\varphi(m_i)\otimes y_i.
\end{equation}
\end{rem}

Suppose that $A$ is a local algebra with maximal ideal $\mathfrak{m}_{A}$. The \emph{inverse (or projective) limit} of $A$ is defined by
$$\mathop{\overleftarrow{\mathrm{lim}}}\limits_{n\rightarrow\infty}(A/\mathfrak{m}_{A}^{n})=\left\{\{a_i\}\in\prod^{\infty}_{n=1} (A/\mathfrak{m}_{A}^{n})\mid a_{n+1}\equiv a_{_{n}}\ \mathrm{\mathrm{mod}}\ \mathfrak{m}_{A}^{n}, n\geq 1\right\}.$$
The algebra $A$ is called \emph{complete}, if $A=\mathop{\overleftarrow{\mathrm{lim}}}\limits_{n\rightarrow\infty}(A/\mathfrak{m}_{A}^{n})$.
\begin{defn}
Suppose that $A$ is a complete local algebra with the maximal ideal $\mathfrak{m}_{A}$. A \emph{formal deformation} $\lambda$ of a JJ algebra $\J$ with base $(A,\mathfrak{m}_{A})$ is
 a family of JJ algebra structures on the completed tensor product $A\hat{\otimes} \J =\mathop{\overleftarrow{\mathrm{lim}}}\limits_{n\rightarrow\infty}((A/\mathfrak{m}_{A}^{n})\otimes\J)$ such that
$$\varepsilon_{A}\hat{\otimes} \mathrm{id}_{\J}:A\hat{\otimes} \J\rightarrow \F\otimes\J=\J$$
is a JJ algebra homomorphism.
\end{defn}
\begin{remark}
If $A=\mathbb{F}[\![t]\!]$, then a formal deformation of $\mathfrak{J}$ with base $A$ is the same as a formal 1-parameter deformation of $\mathfrak{J}$ (see \cite{exma2}).
\end{remark}

Since any complete local algebra can be represented as a quotient algebra of $\mathbb{F}[\![t_1,\ldots,t_n]\!]$ of formal power series, we consider formal deformations with the base $\mathbb{F}[\![t_1,\ldots,t_n]\!]$ in the next two sections, including $n=1$ (Section 4) and $n\geq 2$ (Section 5). Note that the above definitions and the theory is extended to these two cases in an obvious way.

\section{Formal 1-parameter deformations}

\begin{defn} Generalization of the Lie algebra case, see \cite{exma2}.
A \emph{formal 1-parameter deformation} of a JJ algebra $(\mathfrak{J}, \cdot)$ is a family of JJ algebra structures on the $\mathbb{F}[\![t]\!]$-module $\mathfrak{J}[\![t]\!]=\mathbb{F}[\![t]\!]\otimes_{\F}\mathfrak{J}$ such that
$$(x\cdot y)_{t}=\sum_{i=0}^{\infty} t^{i} \varphi_{i}(x,y),\ x,y\in \mathfrak{J}$$,
where each $\varphi_{i}$ is in $\S^{2}(\mathfrak{\mathfrak{J}}, \mathfrak{J})$ and $\varphi_{0}(x,y)=x\cdot y$.
\end{defn}
\begin{rem}
Since the JJ algebra structure on $\mathfrak{J}[\![t]\!]$ is uniquely determined by the sequence $\{\varphi_n\}_{n=1}^{\infty}$, we also call the sequence $\{\varphi_n\}_{n=1}^{\infty}$ the formal 1-parameter deformation of $\mathfrak{J}$.
\end{rem}
\begin{defn}
A formal 1-parameter deformation is called \emph{jump deformation} if for any non-zero value of the parameter $t$ near the origin, it gives isomorphic algebras (which is of course different from the original one), see Gerstenhaber \cite{G}.
\end{defn}
\begin{rem}
Formal deformations are transitive: If $A$ deforms to $B$ and $B$ deforms to $C$, then $A$ deforms to $C$ as well (see \cite{FP}).
\end{rem}
\begin{defn}(following \cite{exma2})
Suppose that $\{\varphi_n\}_{n=1}^{\infty}$ and $\{\varphi'_n\}_{n=1}^{\infty}$ are two formal 1-parameter deformations of a JJ algebra $\mathfrak{J}$. They are called \emph{equivalent} if there exists a linear isomorphism $\widehat{\psi}_{t}=\mathrm{id}_{A}+\psi_1 t+\psi_2 t^{2}+\cdots$, where $\psi_i$ is in $\S^{2}(\mathfrak{J},\mathfrak{J})$, such that
$$\widehat{\psi}_{t}(x \cdot y)_{t}=(\widehat{\psi}_{t}(x)\cdot\widehat{\psi}_{t}(y))'_t,\quad \mathrm{for}\ x,y \in \mathfrak{J}.$$
\end{defn}
\begin{defn}
A formal 1-parameter deformation is called \emph{trivial} if it is equivalent to the original algebra. If every formal 1-parameter deformation  of a JJ algebra $\mathfrak{J}$ is trivial, then $\mathfrak{J}$ is called \emph{rigid}.
\end{defn}
\begin{rem}
An infinitesimal deformation defines a \emph{real} deformation if there are no higher order $t$-terms in the Jacobi identity, which means it can be extended to a formal deformation without obstructions.
\end{rem}

\begin{thm}\label{b}
The sequence $\{\varphi_n\}_{n=1}^{\infty}$ defines a formal 1-parameter deformation of a JJ algebra $\mathfrak{J}$ if and only if the elements $\varphi_n$ in $\S^{2}(\mathfrak{J},\mathfrak{J})$ satisfy the equations
$$\mathrm{d}\varphi_n+\frac{1}{2}\sum_{\mbox{\tiny$\begin{array}{c}
i+j=n\\
i,j>0\\
\end{array}$}}[\varphi_i,\varphi_j]=0,\ n\geq 1.$$
\end{thm}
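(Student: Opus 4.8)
The plan is to reduce the statement to a coefficient-by-coefficient comparison in the formal variable $t$. First I would observe that commutativity of the deformed product $(\cdot)_t$ is automatic: since every $\varphi_i$ lies in $\S^{2}(\mathfrak{J},\mathfrak{J})$ and is therefore symmetric, $(x\cdot y)_t=(y\cdot x)_t$ holds term by term. Hence the sequence $\{\varphi_n\}$ defines a formal $1$-parameter deformation \emph{if and only if} the deformed product satisfies the Jacobi identity
$$\big(x\cdot_t(y\cdot_t z)\big)+\big(y\cdot_t(z\cdot_t x)\big)+\big(z\cdot_t(x\cdot_t y)\big)=0.$$
Expanding the inner and outer products as power series and collecting the coefficient of $t^{n}$, this is equivalent to requiring, for every $n\ge 0$,
$$\sum_{i+j=n}\Big(\varphi_i(x,\varphi_j(y,z))+\varphi_i(y,\varphi_j(z,x))+\varphi_i(z,\varphi_j(x,y))\Big)=0.$$
Because the two sides of the original iff are both term-by-term conditions, establishing the equivalence of these coefficient equations with the asserted equations proves both directions simultaneously.

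Next I would recognize the inner cyclic sum as the composition product of Section 2: using $\varphi_j(z,x)=\varphi_j(x,z)$, the bracketed expression is exactly $(\varphi_i\varphi_j)(x,y,z)$. Then I split the $n$-th equation according to whether an index is $0$. The two boundary terms $(i,j)=(n,0)$ and $(i,j)=(0,n)$ produce $\varphi_n(x,y\cdot z)+\varphi_n(y,z\cdot x)+\varphi_n(z,x\cdot y)$ and $x\cdot\varphi_n(y,z)+y\cdot\varphi_n(z,x)+z\cdot\varphi_n(x,y)$, whose sum is precisely $\mathrm{d}\varphi_n(x,y,z)$ by the definition of the differential. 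In particular the case $n=0$ is just the original Jacobi identity of $\mathfrak{J}$ and imposes no condition, so only $n\ge 1$ remains.

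For the interior terms $\sum_{i+j=n,\,i,j>0}\varphi_i\varphi_j$, I would reindex $(i,j)\mapsto(j,i)$ to get $\sum_{i,j>0}\varphi_j\varphi_i=\sum_{i,j>0}\varphi_i\varphi_j$, so that, using $[\varphi_i,\varphi_j]=\varphi_i\varphi_j+\varphi_j\varphi_i$ (here $(p-1)(q-1)=1$),
$$\frac12\sum_{\substack{i+j=n\\ i,j>0}}[\varphi_i,\varphi_j]=\frac12\sum_{\substack{i+j=n\\ i,j>0}}\big(\varphi_i\varphi_j+\varphi_j\varphi_i\big)=\sum_{\substack{i+j=n\\ i,j>0}}\varphi_i\varphi_j.$$
Combining the boundary contribution $\mathrm{d}\varphi_n$ with this interior contribution turns the $n$-th coefficient equation into $\mathrm{d}\varphi_n+\tfrac12\sum_{i+j=n,\,i,j>0}[\varphi_i,\varphi_j]=0$, which is the claim.

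The argument is essentially bookkeeping, so the only point demanding genuine care—and the step I expect to be the main obstacle—is the index matching in the last display: I must verify that the symmetric bracket correctly accounts both for off-diagonal pairs $\{i,j\}$, which occur twice and so absorb the factor $\tfrac12$, and for the diagonal term $i=j=n/2$ when $n$ is even, where $\tfrac12[\varphi_{n/2},\varphi_{n/2}]=\varphi_{n/2}\varphi_{n/2}$ matches the single interior summand. As a sanity check one can repackage the whole identity via Theorem \ref{a}(3), which gives $\mathrm{d}\varphi_n=[\varphi_0,\varphi_n]$, so that the $n$-th equation becomes the compact master equation $\tfrac12\sum_{i+j=n,\,i,j\ge 0}[\varphi_i,\varphi_j]=0$; I would nonetheless present the split form to match the statement exactly.
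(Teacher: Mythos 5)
Your proposal is correct and follows essentially the same route as the paper: extract the coefficient of $t^{n}$ in the Jacobi identity, identify the cyclic sum with the composition product $\varphi_i\varphi_j$, and convert $\sum_{i+j=n}\varphi_i\varphi_j$ into $\mathrm{d}\varphi_n+\tfrac12\sum_{i,j>0}[\varphi_i,\varphi_j]$ using $[\varphi_i,\varphi_j]=\varphi_i\varphi_j+\varphi_j\varphi_i$ and Theorem \ref{a}(3). The paper's version is terser (it writes the whole coefficient as $\tfrac12\sum_{i,j\ge 0}[\varphi_i,\varphi_j]$ and cites Theorem \ref{a}(3) for the boundary terms), but the content is identical, and your explicit index bookkeeping is accurate.
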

\begin{proof}
For $n\geq 1$, the coefficient of $t^{n}$ in the Jacobi identity for $x,y,z\in \mathfrak{J}$ equals to
$$\sum_{\mbox{\tiny$\begin{array}{c}
\circlearrowleft x,y,z\\
i+j=n,i,j\geq0,\\
\end{array}$}}
\varphi_i(x,\varphi_j(y,z))=\frac{1}{2}
\sum_{\mbox{\tiny$\begin{array}{c}
i+j=n\\
i,j\geq0\\
\end{array}$}}[\varphi_i,\varphi_j](x,y,z)=0.$$
From Theorem \ref{a} (3), the proof is complete.
\end{proof}
\begin{cor}
For $n=1$ in the above Theorem, we get that $\varphi_1$ is a cocycle. More generally, if $\varphi_{1}=0$, the first non-vanishing cochain $\varphi_{i}$ will be a cocycle.
\end{cor}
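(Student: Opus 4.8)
The plan is to read off both claims directly from the deformation equations in Theorem \ref{b}, specializing the index $n$ and examining the quadratic Massey-square term $\frac{1}{2}\sum_{i+j=n,\,i,j>0}[\varphi_i,\varphi_j]$. The entire content of the corollary is that this quadratic term either is empty or vanishes identically under the stated hypotheses, leaving only the linear term $\mathrm{d}\varphi_n$, whose vanishing is precisely the cocycle condition.

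First I would dispose of the case $n=1$. Here the summation in Theorem \ref{b} ranges over pairs $(i,j)$ with $i+j=1$ and $i,j>0$; no such pair of strictly positive integers exists, so the sum is empty. The deformation equation for $n=1$ therefore reduces to $\mathrm{d}\varphi_1=0$, i.e. $\varphi_1\in\mathrm{Z}^2(\mathfrak{J},\mathfrak{J})$, which is the first assertion.

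Next I would treat the general statement. Assume $\varphi_1=0$ and let $\varphi_i$ be the first non-vanishing cochain, so that $\varphi_1=\varphi_2=\cdots=\varphi_{i-1}=0$ while $\varphi_i\neq 0$. Applying Theorem \ref{b} with $n=i$ gives
$$\mathrm{d}\varphi_i+\frac{1}{2}\sum_{\mbox{\tiny$\begin{array}{c}j+k=i\\ j,k>0\end{array}$}}[\varphi_j,\varphi_k]=0.$$
In each term of the sum both indices satisfy $1\leq j,k\leq i-1$, since they are strictly positive and sum to $i$; hence for every such pair at least one of $\varphi_j,\varphi_k$ lies among the vanishing cochains $\varphi_1,\dots,\varphi_{i-1}$, and bilinearity of the bracket forces $[\varphi_j,\varphi_k]=0$. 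The quadratic term therefore vanishes, and we are left with $\mathrm{d}\varphi_i=0$, so $\varphi_i$ is a cocycle.

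There is no genuine obstacle here: the result is a formal consequence of Theorem \ref{b} together with the bilinearity of $[\,\cdot\,,\cdot\,]$. The only point requiring any care is the bookkeeping on the index ranges, namely verifying that the constraints $i+j=n$ and $i,j>0$ force both summation indices to fall strictly below $n$ in the $n=i$ step, which is what makes every surviving bracket involve a cochain that has already been assumed to vanish.
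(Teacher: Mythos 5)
Your proof is correct and is exactly the argument the paper intends: the corollary is stated as an immediate consequence of Theorem 4.8, and your index bookkeeping (empty sum for $n=1$; all brackets $[\varphi_j,\varphi_k]$ with $j+k=i$, $j,k>0$ vanish because $j,k\leq i-1$) fills in the routine details correctly. No issues.
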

\begin{cor}
Suppose $\varphi$ is a cocycle. Then $\varphi$ defines a real deformation if and only if $[\varphi,\varphi]=0$.
\end{cor}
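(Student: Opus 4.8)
The plan is to apply Theorem \ref{b} directly to the simplest possible extension of the infinitesimal deformation determined by $\varphi$. Recall that $\varphi$ defines a real deformation precisely when the infinitesimal deformation $\varphi_1=\varphi$ extends to a genuine formal $1$-parameter deformation carrying no higher-order corrections; equivalently, when the sequence $\varphi_1=\varphi$, $\varphi_n=0$ for $n\geq 2$, is itself a formal $1$-parameter deformation of $\J$.

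First I would invoke Theorem \ref{b}, according to which this sequence is a formal deformation if and only if
$$\mathrm{d}\varphi_n+\frac{1}{2}\sum_{\substack{i+j=n\\ i,j>0}}[\varphi_i,\varphi_j]=0$$
for every $n\geq 1$. I would then examine these equations term by term under the substitution $\varphi_1=\varphi$, $\varphi_n=0$ ($n\geq 2$). For $n=1$ the equation reads $\mathrm{d}\varphi=0$, which holds by the hypothesis that $\varphi$ is a cocycle. For $n=2$ the only pair $(i,j)$ with $i+j=2$ and $i,j>0$ is $(1,1)$, while the term $\mathrm{d}\varphi_2$ vanishes since $\varphi_2=0$; hence the equation becomes $\tfrac{1}{2}[\varphi,\varphi]=0$, which is equivalent to $[\varphi,\varphi]=0$.

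For $n\geq 3$ the equations are automatically satisfied: any pair $(i,j)$ with $i+j=n$ and $i,j>0$ has at least one index $\geq 2$, so at least one of $\varphi_i,\varphi_j$ is zero and every bracket $[\varphi_i,\varphi_j]$ vanishes, and moreover $\mathrm{d}\varphi_n=0$ because $\varphi_n=0$. Thus the entire infinite system of Theorem \ref{b} collapses to the single nontrivial condition $[\varphi,\varphi]=0$, and I would conclude that $\varphi$ defines a real deformation if and only if $[\varphi,\varphi]=0$.

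I do not anticipate a serious obstacle; this is essentially an immediate specialization of Theorem \ref{b}. The one point deserving care is the verification that all equations of order $n\geq 3$ are vacuous, which rests on the observation that in each surviving bracket at least one factor is forced to be zero, so that no obstruction beyond the Massey square $[\varphi,\varphi]$ can arise.
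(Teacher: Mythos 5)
Your proposal is correct and follows exactly the paper's own argument: identify the real deformation with the sequence $\varphi_1=\varphi$, $\varphi_n=0$ for $n\geq 2$, and reduce the system of equations in Theorem \ref{b} to the single condition $[\varphi,\varphi]=0$. The only difference is that you spell out the vanishing of the equations for $n\geq 3$, which the paper leaves implicit.
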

\begin{proof}
Note that $\varphi$ defines a real deformation if and only if
the sequence $\{\varphi_n\}_{n=1}^{\infty}$ defines a formal deformation, where $\varphi_1=\varphi$, $\varphi_2=\cdots=\varphi_n=\cdots=0$. The proof follows from Theorem \ref{b}.
\end{proof}
\begin{prop}\cite{exma2}
Every equivalence class of formal deformations defines uniquely a 2-cohomology class.
\end{prop}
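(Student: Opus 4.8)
The plan is to attach to every formal deformation the cohomology class of its linear term $\varphi_1$ and to verify that equivalent deformations yield the same class. By Theorem \ref{b}, the $n=1$ instance of the deformation equations reads $\mathrm{d}\varphi_1+\frac{1}{2}\sum_{i+j=1,\,i,j>0}[\varphi_i,\varphi_j]=0$, and since the sum is empty this is just $\mathrm{d}\varphi_1=0$. Hence $\varphi_1\in\mathrm{Z}^{2}(\J,\J)$ and its class $[\varphi_1]\in\mathbb{H}$ is well-defined for every formal deformation $\{\varphi_n\}_{n=1}^{\infty}$. It therefore suffices to prove that if $\{\varphi_n\}$ and $\{\varphi'_n\}$ are equivalent, then $[\varphi_1]=[\varphi'_1]$.

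The next step is to unwind the equivalence relation at first order in $t$. Let $\widehat{\psi}_t=\mathrm{id}+\psi_1 t+\psi_2 t^{2}+\cdots$ be an equivalence, where each $\psi_i$ is a linear endomorphism of $\J$, that is, a $1$-cochain in $\S^{1}(\J,\J)$. Expanding both sides of $\widehat{\psi}_t(x\cdot y)_t=(\widehat{\psi}_t(x)\cdot\widehat{\psi}_t(y))'_t$ and reading off the coefficient of $t$, one finds that only the constant terms $\varphi_0=\varphi'_0=\cdot$ and $\psi_0=\mathrm{id}$ combine with the degree-one pieces, giving
$$\varphi_1(x,y)+\psi_1(x\cdot y)=\varphi'_1(x,y)+x\cdot\psi_1(y)+\psi_1(x)\cdot y.$$

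Rearranging this identity and comparing it with the differential $\mathrm{d}\psi_1(x,y)=\psi_1(x\cdot y)-x\cdot\psi_1(y)-y\cdot\psi_1(x)$ of the $1$-cochain $\psi_1$ yields
$$\varphi_1(x,y)-\varphi'_1(x,y)=-\mathrm{d}\psi_1(x,y),$$
so that $\varphi_1-\varphi'_1\in\mathrm{B}^{2}(\J,\J)$ and hence $[\varphi_1]=[\varphi'_1]$ in $\mathbb{H}$. This shows the assignment $\{\varphi_n\}\mapsto[\varphi_1]$ is constant on equivalence classes, which is precisely the assertion. The only delicate point is the bookkeeping in extracting the coefficient of $t$: since both the deformed multiplications and the intertwiner $\widehat{\psi}_t$ are power series, one must confirm that all genuinely higher-order contributions, namely the compositions $\psi_j\circ\varphi_i$ with $i+j\geq 2$ and the terms $\varphi'_k(\psi_a(\cdot),\psi_b(\cdot))$ with $k+a+b\geq 2$, do not enter, leaving exactly the four terms displayed above.
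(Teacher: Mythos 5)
Your proof is correct and is exactly the standard argument behind this proposition, which the paper itself leaves unproved (it only cites Fialowski's Lie-algebra treatment): the $n=1$ case of Theorem \ref{b} shows $\varphi_1$ is a cocycle, and extracting the coefficient of $t$ from the equivalence relation gives $\varphi_1-\varphi'_1=-\mathrm{d}\psi_1$, so the class $[\varphi_1]\in\mathbb{H}$ depends only on the equivalence class. Your bookkeeping of which terms survive at order $t$ is accurate (note only that you correctly read $\psi_i\in\S^{1}(\J,\J)$ despite the paper's typo saying $\S^{2}$, and that your displayed identity has five terms, not four).
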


\begin{cor}
If $\mathbb{H}=0$, then $\mathfrak{J}$ is rigid.
\end{cor}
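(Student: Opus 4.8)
The plan is to take an arbitrary formal $1$-parameter deformation $\{\varphi_n\}_{n=1}^{\infty}$ of $\J$ and construct a formal equivalence carrying it to the trivial deformation. The engine of the argument is an order-raising induction: at each stage the first non-vanishing cochain is a cocycle, and the hypothesis $\mathbb{H}=0$ lets me realize it as a coboundary, which I then absorb into a change of trivialization so that the first non-vanishing term is pushed to strictly higher order in $t$.

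To set up the induction, I would assume $\{\varphi_n\}$ is nontrivial and let $k\geq 1$ be the least index with $\varphi_k\neq 0$, so that $\varphi_1=\cdots=\varphi_{k-1}=0$. By the corollary to Theorem~\ref{b}, this first non-vanishing cochain $\varphi_k$ is a cocycle, $\mathrm{d}\varphi_k=0$. Since $\mathbb{H}=\mathrm{H}^2(\J,\J)=0$, every $2$-cocycle is a coboundary, so I may choose a linear map $\psi_k\in\S^1(\J,\J)$ with $\mathrm{d}\psi_k=-\varphi_k$.

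Next I would eliminate $\varphi_k$ by a change of trivialization. Consider the formal linear automorphism $\widehat{\psi}_t=\mathrm{id}_\J+t^k\psi_k$ of $\J[\![t]\!]$, and let $\{\varphi'_n\}$ be the equivalent deformation it produces via $\widehat{\psi}_t(x\cdot y)_t=(\widehat{\psi}_t(x)\cdot\widehat{\psi}_t(y))'_t$. Comparing coefficients of $t^n$ gives $\varphi'_n=\varphi_n$ for $n<k$, while the coefficient of $t^k$ yields
$$\varphi'_k(x,y)=\varphi_k(x,y)+\bigl(\psi_k(x\cdot y)-\psi_k(x)\cdot y-x\cdot\psi_k(y)\bigr)=\varphi_k(x,y)+\mathrm{d}\psi_k(x,y)=0.$$
Thus the equivalent deformation $\{\varphi'_n\}$ has its first non-vanishing term in order strictly greater than $k$, which is exactly the inductive gain.

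Finally I would pass to the limit. Iterating the previous step produces a sequence of equivalences $\widehat{\psi}_t^{(k)},\widehat{\psi}_t^{(k+1)},\dots$, where $\widehat{\psi}_t^{(j)}=\mathrm{id}_\J+t^{j}\psi_j$ removes the term of order $j$; each is the identity modulo an increasing power of $t$. Their composite therefore stabilizes to every finite order and converges in the $t$-adic topology on $\J[\![t]\!]$ to a single formal automorphism $\widehat{\Psi}_t=\mathrm{id}_\J+(\text{higher order})$, which carries $\{\varphi_n\}$ to the deformation with all higher terms vanishing, i.e. the trivial one; hence $\J$ is rigid. The main obstacle, and the only point requiring genuine care, is precisely this passage to the limit: one must verify that the infinite composition of equivalences is a well-defined formal equivalence, and this is exactly where completeness of $\F[\![t]\!]$ in the $t$-adic topology is used.
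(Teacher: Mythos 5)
Your proof is correct and is precisely the standard order-by-order Gerstenhaber-style argument: the first non-vanishing $\varphi_k$ is a cocycle, $\mathbb{H}=0$ makes it a coboundary, the equivalence $\mathrm{id}_\J+t^k\psi_k$ kills it, and the $t$-adically convergent composite of these equivalences trivializes the deformation. The paper states this corollary without proof (deferring to the Lie-algebra literature it cites), and your argument is exactly the one intended there, with the signs and the identification of $\psi_k(x\cdot y)-\psi_k(x)\cdot y-x\cdot\psi_k(y)$ with $\mathrm{d}\psi_k$ (via commutativity) handled correctly.
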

\begin{rem}
The condition $\mathbb{H}=0$ is sufficient for $\mathfrak{\mathfrak{J}}$ being rigid, but not necessary. We can find examples in the following low-dimensional cases.
\end{rem}
 Next we construct formal 1-parameter deformations of low-dimensional JJ algebras in the classification up to dimension 5.

For an $m$-dimensional JJ algebra $\mathfrak{J}_{m}$, spanned by
$\{e_{1},\cdots,e_m\}$, define $e^{i,j}_{k}\in\mathrm{S}^{2}(\mathfrak{J}_{m},\mathfrak{J}_{m})$ by $e^{i,j}_{k}:\ (e_i,e_j)\mapsto e_k$, $1\leq i,j,k\leq m$.
To get  formal deformations, we need to consider 2-cohomology. Here we consider $\J=\J_{1,2}\oplus\F$ as an example. Our computation consists of the following steps:

(1) determine a basis of the space $\mathrm{Z}^{2}(\J,\J)$,

(2) determine a basis of the space $\mathrm{B}^{2}(\J,\J)$,

(3) determine a basis of the quotient space $\mathbb{H}$,

(4) extend the infinitesimal deformations  defined by the representative cocycles of a basis of $\mathbb{H}$.

\begin{exa}
As an example, we carry out the computation for the algebra  $\J=\J_{1,2}\oplus\F$ with basis $\{e_1,e_2,e_3\}$ and the nontrivial product $e_1^2=e_2$.
\smallskip

(1) Suppose that $\varphi\in \mathrm{S}^{2}(\J,\J)$ such that $\varphi(e_i,e_j)=\sum_{k=1}^{3}a^{i,j}_ke_k$, where $a^{i,j}_k\in\F$ for $1\leq i\leq j\leq k \leq3$. For $\mathrm{d}\varphi(e_i,e_j,e_k)=0$,  we have
$$\varphi(e_i,e_j\cdot e_k)+\varphi(e_j,e_k\cdot e_i)+\varphi(e_k,e_i\cdot e_j)+e_i\cdot\varphi(e_j,e_k)+e_j\cdot\varphi(e_k,e_i)+e_k\cdot\varphi(e_i,e_j)=0$$
for $1\leq i\leq j\leq k\leq 3$.
Equating the coefficients of $e_1$, $e_2$, $e_3$ in $\mathrm{d}\varphi(e_i,e_j,e_k)$, we get the following relations:

(i) $a^{1,2}_1=a^{1,2}_3=a^{2,2}_1=a^{2,2}_2=a^{2,2}_3=a^{2,3}_1=a^{2,3}_3=a^{3,3}_1=0$;

(ii) $a^{1,2}_2=-a^{1,1}_1$, $a^{2,3}_2=-2a^{1,3}_1$.\\
We define the following cocycles according to the above relations:

$\varphi_1=e^{1,3}_1-2e^{2,3}_2,\ \varphi_2=e^{1,3}_3,\ \varphi_3=e^{3,3}_2,\ \varphi_4=e^{3,3}_3,\
\varphi_5=e^{1,1}_1-e^{1,2}_2,\ \varphi_6=e^{1,1}_2,$

$\varphi_7=e^{1,1}_3,\ \varphi_8=e^{1,3}_2.$\\
It is easy to check that $\{\varphi_1,\ldots,\varphi_8\}$ forms a basis of $\mathrm{Z}^{2}(\J,\J)$.
\smallskip

(2) Define  $e^{j}_{i}\in \mathrm{S}^{1}(\J,\J)$ by $e^{j}_{i}: e_i\mapsto e_j$, $1\leq i,j\leq 3$. By definition, we have
 $$\mathrm{d}e^{j}_{i}(e_k,e_l)=\varphi(e_k\cdot e_l)-e_k\cdot\varphi(e_l)-e_l\cdot\varphi(e_k),\ 1\leq k,l\leq 3.$$
By a direct computation, we have
$$\mathrm{B}^{2}(\J,\J)=\mathrm{span}\{\mathrm{d}e^{j}_{i}\mid 1\leq i,j\leq 3\}=\{\varphi_4,\ldots,\varphi_8\}.$$

(3) It is straightforward to check that
$$\{\overline{\varphi}_1,\ldots,\overline{\varphi}_4\}$$
forms a basis of $\mathbb{H}$, where $\overline{\varphi}_i$ denotes the cohomology class represented by the cocycle $\varphi_i$. Thus $\mathrm{dim}\ \mathbb{H}=4$. These four cocycles give us four non-equivalent infinitesimal deformations
by Proposition 4.10, which define JJ algebras parameterized by the factor space $\F[\![t]\!]/(t^{2})$.
\smallskip

(4) To extend these four infinitesimal 1-parameter deformations,  we need to compute the Massey squares in Ch.2.
We get that only $[\varphi_2,\varphi_2]$, $[\varphi_3,\varphi_3]$ are coboundaries, which means only these two infinitesimal deformations can be extended.

(i) From $\mathrm{d}(-2e^{2,3}_{3})=-\frac{1}{2}[\varphi_2,\varphi_2]$, we can extend this infinitesimal deformation to the deformation of order 2, parameterized by $\F[\![t]\!]/(t^{3})$, with the multiplication:
$$\mu_t(x,y)=x\cdot y +\varphi_2(x,y)t-2e^{2,3}_{3}(x,y)t^{2}.$$
In order to extend it to order 3, we need to compute $[\varphi_2, -2e^{2,3}_{3}+r]$ for any $r\in \mathrm{Z}^{2}(\J,\J)$. By a direct computation, we find that $[\varphi_2, -2e^{2,3}_{3}+r]$ is not a coboundary. This means this deformation can not be extended further.

(ii) From
$[\varphi_3,\varphi_3]=0$,
we get $\varphi_3$ defines a real deformation. Moreover, we get that  this 1-parameter deformation is isomorphic to $\J_{1,3}$ for any non-zero value of $t$. An isomorphism can be given by the change of basis:
$$e'_1=e_1,\ e'_2=e_2,\ e'_3=\frac{1}{\sqrt{t}}e_3.$$
\end{exa}
\medskip

Similarly, by computing 2-cohomology with coefficients in the adjoint module  for nontrivial JJ algebras up to dimension 5,
we obtain the results in the Table below:
$$
\begin{array}{c|c|c|c|c|c|c|c|c}
\hline
  \mathrm{Dimension} & 2 & 3 & 3 & 4 & 4 & 4 & 4 & 4\\
\hline
  \mathrm{Algebra} & \mathfrak{J}_{1,2} & \mathfrak{J}_{1,2}\bigoplus \F & \mathfrak{J}_{1,3} & \mathfrak{J}_{1,2}\bigoplus \F^{2} & \mathfrak{J}_{1,3}\bigoplus \F & \mathfrak{J}_{1,2}^{2} & \mathfrak{J}_{1,4} & \mathfrak{J}_{2,4}\\
\hline
  \mathrm{dim}\ \mathbb{H} & 0 & 4 & 2 & 15 & 10 & 2 & 4 &8 \\
\hline
\end{array}
$$
$$
\begin{array}{c|c|c|c|c|c}
\hline
  5 & 5 &  5 & 5 & 5 & 5 \\
\hline
   \J_{1,2}\bigoplus \F^{3} & \J_{1,3}\bigoplus \F^{2}  & \J_{1,2}^{2}\bigoplus \F & \J_{1,4}\bigoplus \F & \J_{2,4}\bigoplus \F & \J_{1,2}\bigoplus \J_{1,3} \\
\hline
   36 & 27 & 12 & 16 & 22 &  8   \\
\hline
\end{array}
$$
$$
\begin{array}{c|c|c|c|c|c|c|c}
\hline
  5 & 5 & 5 &  5 & 5 & 5 & 5 & 5\\
\hline
  \J_{1,5} & \J_{2,5} & \J_{3,5}  & \J_{4,5}& \J_{5,5} & \J_{6,5} & \J_{7,5} & \J_{8,5}\\
\hline
  6 & 10 & 8 & 4 & 6 & 13 &  20  & 1\\
\hline
\end{array}
$$
The above 2-cohomology spaces can be spanned by the following representative cocycles:

$\J_{1,2}\oplus \F$: $e^{1,3}_1-2e^{2,3}_2,\ e^{1,3}_3,\ e^{3,3}_2,\ e^{3,3}_3,$

$\J_{1,3}$: $e^{1,3}_1-2e^{2,3}_2+2e^{3,3}_{3},\ e^{1,3}_3-2e^{3,3}_1,$

$\J_{1,2}\oplus \F^{2}$: $e^{1,3}_1-2e^{2,3}_2,\ e^{1,4}_1-2e^{2,4}_2,\
e^{1,3}_3,\ e^{1,3}_4,\ e^{1,4}_3,\ e^{1,4}_4,\ e^{3,3}_2,\ e^{3,3}_3,\ e^{3,3}_4,\ e^{3,4}_2,\ e^{3,4}_3,\ e^{3,4}_4,\ e^{4,4}_2,\ e^{4,4}_3,\ e^{4,4}_4,$

$\J_{1,3}\oplus \F$: $e^{1,3}_1-2e^{2,3}_2+2e^{3,3}_3,\ e^{1,4}_1-2e^{2,4}_2+e^{3,4}_3,\
e^{1,3}_3-2e^{3,3}_1,\ e^{1,4}_3-e^{3,4}_1,\ e^{1,1}_4,\ e^{1,3}_4,\ e^{1,4}_4,\ e^{3,4}_4,\ e^{4,4}_2,\ e^{4,4}_4,$

$\J_{1,2}^{2}$: $e^{1,3}_1-2e^{2,3}_2,\ e^{1,3}_3-2e^{1,4}_4,$

$\J_{1,4}$: $e^{1,4}_2-2e^{2,3}_2,\ e^{1,4}_4-2e^{2,3}_4,\ 2e^{3,3}_3-e^{3,4}_4,\ e^{3,3}_2,$

$\J_{2,4}$:
$e^{1,3}_1-2e^{2,3}_2+2e^{3,4}_4,\ e^{1,4}_1-2e^{2,4}_2+4e^{4,4}_4,\ e^{1,3}_3-e^{3,4}_1,\ e^{1,3}_4-2e^{3,3}_1,\ e^{1,4}_3-2e^{4,4}_1,\ e^{1,4}_4-e^{3,4}_1,\ 2e^{3,3}_3-e^{3,4}_4,\ e^{3,4}_3-2e^{4,4}_4$,

$\J_{1,2}\oplus \F^{3}$: $e^{1,4}_1-2e^{2,4}_2,\ e^{1,3}_1-2e^{2,3}_2,\ e^{1,5}_1-2e^{2,5}_2,\
e^{1,i_{1}}_{i_2},\ e^{j_1,j_2}_j,$
where $3\leq i_1,i_2\leq 5,\ 3\leq j_1\leq j_2\leq 5,\ 2\leq j\leq 5$.

$\J_{1,3}\oplus \F^{2}$:
$e^{1,3}_1-2e^{2,3}_2+2e^{3,3}_3,\ e^{1,4}_1-2e^{2,4}_2+e^{3,4}_3,\ e^{1,5}_1-2e^{2,5}_2+e^{3,5}_3,\
e^{1,3}_3-2e^{3,3}_1,\ e^{1,4}_3-e^{3,4}_1,\ e^{1,5}_3-e^{3,5}_1,\
e^{1,1}_{j_1},\ e^{1,3}_{j_1},\ e^{1,j_1}_{j_2},\ e^{3,j_1}_{j_2},\ e^{j_1,j_2}_i,$
where $j_1,j_2=4,5,\  i=2,4,5$.

$\J_{1,2}^{2}\oplus \F$: $e^{1,3}_1-2e^{2,3}_2,\ e^{1,3}_3-2e^{1,4}_4,\ e^{1,5}_1-2e^{2,5}_2,\ e^{3,5}_3-2e^{4,5}_4,\
e^{1,3}_5,\ e^{1,5}_4,\ e^{1,5}_5,\ e^{3,5}_2,\ e^{3,5}_5,\ e^{5,5}_2,\ e^{5,5}_4,\ e^{5,5}_5,$

$\J_{1,4}\oplus \F$:
$e^{1,5}_1-2e^{2,5}_2-e^{4,5}_{4},\ e^{1,4}_2-2e^{2,3}_2,\ e^{1,4}_4-2e^{2,3}_4,\ e^{1,4}_5-2e^{2,3}_5,\
e^{1,5}_3-2e^{2,5}_4,\ e^{3,4}_4-2e^{3,3}_3,\ e^{3,5}_3-e^{4,5}_4,\
e^{1,5}_2,\ e^{1,5}_5,\ e^{3,3}_2,\ e^{3,3}_5,\ e^{3,5}_2,\ e^{3,5}_5,\ e^{5,5}_2,\ e^{5,5}_4,\ e^{5,5}_5,$

$\J_{2,4}\oplus \F$: $e^{1,3}_1-2e^{2,3}_2+2e^{3,4}_{4},\ e^{1,4}_1-2e^{2,4}_2+4e^{4,4}_4,\ e^{1,5}_1-2e^{2,5}_2+2e^{4,5}_4,\ e^{1,3}_3-e^{3,4}_1,\
e^{1,3}_4-2e^{3,3}_1,\ e^{1,4}_3-2e^{4,4}_1,\
e^{1,4}_4-e^{3,4}_1,\ e^{1,5}_3-e^{4,5}_1,\ e^{1,5}_4-e^{3,5}_1,\ e^{3,4}_4-2e^{3,3}_3,\ e^{3,4}_3-2e^{4,4}_4,\ e^{3,5}_3-e^{4,5}_4,\ e^{1,1}_5,\ e^{1,3}_5,\ e^{1,4}_5,\ e^{1,5}_5,\ e^{3,3}_5,\ e^{3,5}_5,\ e^{4,4}_5,\ e^{4,5}_5,\ e^{5,5}_2,\ e^{5,5}_5,$

$\J_{1,2}\oplus \J_{1,3}$: $e^{1,3}_3-2e^{1,4}_4+e^{1,5}_{5},\ e^{3,5}_3-2e^{4,5}_4+2e^{5,5}_5,\ e^{1,3}_1-2e^{2,3}_2,\ e^{1,3}_5-e^{1,5}_3,\
e^{1,5}_1-2e^{2,5}_2,\ e^{3,5}_5-2e^{5,5}_3,\ e^{3,3}_2,\ e^{3,5}_2,$

$\J_{1,5}$: $e^{3,5}_2-2e^{1,4}_2,\ e^{3,5}_4-2e^{1,4}_4,\ e^{3,5}_5-2e^{1,4}_5,\ e^{1,5}_2-2e^{2,3}_2,\
e^{1,5}_4-2e^{2,3}_4,\ e^{1,5}_5-2e^{2,3}_5,$

$\J_{2,5}$: $e^{1,3}_1-2e^{2,3}_2-e^{3,4}_4,\ e^{1,3}_3-2e^{1,5}_5+4e^{2,4}_5,\ e^{1,4}_1-2e^{2,4}_2-2e^{4,4}_4,\ e^{3,4}_3+4e^{4,4}_4-2e^{4,5}_5,\
e^{1,3}_4-2e^{2,3}_5,\ e^{1,4}_3-e^{3,4}_4,\
e^{1,4}_4-2e^{2,4}_5,\ e^{1,4}_2,\ e^{3,4}_2,\ e^{4,4}_2,$

$\J_{3,5}$: $e^{1,3}_1-2e^{2,3}_2+2e^{3,4}_1-2e^{3,4}_3-e^{3,4}_4-2e^{3,5}_2-12e^{4,4}_4+6e^{4,5}_5,\ e^{1,3}_3+2e^{3,3}_1-2e^{3,3}_4-3e^{3,4}_4+2e^{3,5}_5,\ e^{1,5}_2-2e^{2,4}_2-e^{3,3}_1-e^{3,4}_1+e^{3,4}_3+2e^{4,4}_4-e^{4,5}_5,\
e^{3,3}_3-e^{3,4}_1+e^{3,4}_3+e^{3,4}_4+e^{3,5}_2-e^{3,5}_5+6e^{4,4}_4-3e^{4,5}_5,\
e^{1,3}_4-2e^{2,3}_5-2e^{3,3}_4,\
e^{1,5}_5-2e^{2,4}_5-e^{3,3}_4-2e^{3,4}_4+e^{3,5}_5,\ e^{4,4}_2,\ e^{4,4}_5,$

$\J_{4,5}$: $e^{1,3}_1-2e^{2,3}_2+e^{3,5}_{5}-2e^{5,5}_3,\ e^{1,3}_3-2e^{1,4}_4+e^{1,5}_5+2e^{5,5}_1,\ e^{1,3}_5-e^{1,5}_3+e^{3,5}_1,\ e^{1,5}_1-2e^{2,5}_2+e^{3,5}_3-2e^{4,5}_4+2e^{5,5}_5,$

$\mathfrak{J}_{5,5}$:
$e^{1,3}_1-2e^{2,3}_2-2e^{3,5}_3-4e^{4,5}_2+4e^{4,5}_4-4e^{5,5}_5,\ e^{1,5}_1-2e^{2,5}_2+e^{3,5}_3-2e^{4,5}_4+2e^{5,5}_5,\ e^{1,3}_3-2e^{1,4}_4+e^{1,5}_5+2e^{3,5}_1,\
e^{1,3}_5+2e^{1,4}_2-2e^{1,4}_4+2e^{1,5}_5,\ e^{3,5}_5+2e^{4,5}_2-2e^{4,5}_4+4e^{5,5}_5,\ e^{5,5}_2,$

$\mathfrak{J}_{6,5}$:
$e^{1,4}_1-2e^{2,4}_2+2e^{4,4}_4-e^{4,5}_5,\ e^{1,4}_3-2e^{2,4}_5+4e^{4,4}_3,\ e^{1,4}_4-2e^{2,4}_2+4e^{4,4}_4,\ e^{1,5}_2-2e^{2,3}_2+2e^{4,5}_2,\
e^{1,5}_5-2e^{2,3}_5+2e^{4,5}_5,\ e^{3,5}_5-2e^{3,3}_3,\ e^{3,5}_2-2e^{3,3}_4,\ e^{3,4}_3-e^{4,5}_5,\ e^{3,4}_4-e^{4,5}_2,\ e^{3,3}_2,\ e^{3,3}_5,\ e^{4,4}_2,\ e^{4,4}_5,$

$\J_{7,5}$:
$e^{1,3}_1-2e^{2,3}_2+2e^{3,3}_{3}+2e^{4,5}_3,\ e^{1,4}_1-2e^{2,4}_2+e^{3,4}_3+2e^{4,5}_5,\ e^{1,5}_1-2e^{2,5}_2+e^{3,5}_3+4e^{5,5}_5,\ e^{1,3}_4-e^{3,5}_1,\
e^{1,3}_5-e^{3,4}_1,\ e^{1,4}_3-e^{3,4}_1,\ e^{1,4}_4-e^{4,5}_1,\  e^{1,5}_3-e^{3,5}_1,\ \ e^{1,5}_5-e^{4,5}_1,\ e^{3,4}_4-e^{4,5}_3,\ e^{3,5}_5-e^{4,5}_3,\ e^{1,3}_3-2e^{3,3}_1,\
e^{1,4}_5-2e^{4,4}_1,\ e^{1,5}_4-2e^{5,5}_1,\ e^{3,5}_3-2e^{3,3}_4,
\ e^{3,4}_3-2e^{3,3}_5,\ e^{3,4}_5-2e^{4,4}_3,\ e^{3,5}_4-2e^{5,5}_3,\ e^{4,5}_5-2e^{4,4}_4,\ e^{4,5}_4-2e^{5,5}_5,$

$\J_{8,5}$: $e^{3,4}_3-2e^{4,4}_4+e^{4,5}_5+2e^{5,5}_3$.\\
Among these cocycles, we obtain that, by computing the brackets, only the following cocycles define extendible infinitesimal deformations, which are all jump deformations.

$\J_{1,2}\bigoplus \F$: $e^{3,3}_2$,

$\J_{1,3}\bigoplus \F$: $e^{1,3}_1-2e^{2,3}_2+2e^{3,3}_3,\ e^{1,3}_3-2e^{3,3}_1$,

$\J_{1,2}\bigoplus \F^{2}$: $e^{1,3}_4,\ e^{1,4}_3,\ e^{3,3}_2,\ e^{3,3}_4,\ e^{3,4}_2,\ e^{4,4}_2,\ e^{4,4}_3$,

$\J_{1,3}\bigoplus \F$: $e^{1,1}_4,\ e^{1,3}_4,\ e^{4,4}_2$,

$\J_{1,4}$: $e^{3,3}_2$,

$\J_{1,2}\oplus \F^{3}$: $e^{1,i_{1}}_{i_2},\ e^{j_1,j_2}_j,$
 where $3\leq i_1\neq i_2\leq 5,\ 3\leq j_1\leq j_2\leq 5,\ 2\leq j\leq 5,\ j\neq j_1, j_2$,

$\J_{1,3}\oplus \F^{2}$: $e^{1,1}_{j_1},\ e^{1,3}_{j_1},\ e^{1,j_1}_{j_2},\ e^{3,j_1}_{j_2},\ e^{j_1,j_1}_2,\ e^{j_1,j_1}_{j_2},\ e^{4,5}_2,$
where $j_1,j_2=4,5$, $j_1\neq j_2$,

$\J_{1,2}^{2}\oplus \F$: $e^{1,3}_5,\ e^{1,5}_4,\ e^{3,5}_2,\ e^{5,5}_2,\ e^{5,5}_4$,

$\J_{1,4}\oplus \F$: $e^{1,4}_5-2e^{2,3}_5,\ e^{1,5}_3-2e^{2,5}_4,\ e^{1,5}_2,\ e^{3,3}_2,\ e^{3,3}_5,\ e^{3,5}_2,\ e^{5,5}_2,\ e^{5,5}_4$,

$\J_{2,4}\oplus \F$: $e^{1,1}_5,\ e^{1,3}_5,\ e^{1,4}_5,\ e^{3,3}_5,\ e^{4,4}_5,\ e^{5,5}_2$,

$\J_{1,2}\oplus \J_{1,3}$: $e^{3,3}_2,\ e^{3,5}_2$,

$\J_{1,5}$: $e^{3,5}_2-2e^{1,4}_2,\ e^{1,5}_4-2e^{2,3}_4$,

$\J_{2,5}$: $e^{1,3}_4-2e^{2,3}_5,\ e^{1,4}_2,\ e^{3,4}_2,\ e^{4,4}_2$,

$\J_{3,5}$: $e^{1,3}_4-2e^{2,3}_5-2e^{3,3}_4,\ e^{4,4}_2,\ e^{4,4}_5$,

$\mathfrak{J}_{5,5}$: $e^{5,5}_2$,

$\mathfrak{J}_{6,5}$: $e^{1,4}_3-2e^{2,4}_5+4e^{4,4}_3,\ e^{3,5}_2-2e^{3,3}_4,\ e^{3,3}_2,\ e^{3,3}_5,\ e^{4,4}_2,\ e^{4,4}_5$.\\
\medskip

Finally, we get the picture of 1-parameter real deformations as follows:

in dimension 3,
$$\xymatrix{
       \mathfrak{J}_{1,2}\oplus \F \ar[d]^{}          &             \\
   \mathfrak{J}_{1,3}   &           }$$

in dimension 4,
$$\xymatrix{
                &        \mathfrak{J}_{1,2}\oplus \F^{2}\ar[dr]\ar[dl] &  &   \\
  \mathfrak{J}_{1,4}\ar[dr]  &   &  \mathfrak{J}_{1,3}\oplus \F\ar[dl]\ar[dr]&  \\
 & \mathfrak{J}_{1,2}^{2}& & \mathfrak{J}_{2,4} }
$$

in dimension 5,
$$\xymatrix{
              &   &   \mathfrak{J}_{1,2}\oplus \F^{3}\ar[dr]\ar[dl] &  &  & &\\
  &\mathfrak{J}_{1,4}\oplus \F \ar[dr]\ar[dd]   & &  \mathfrak{J}_{1,3}\oplus \F^{2}\ar[dl]\ar[dr]&  &&\\
      &     & \mathfrak{J}_{1,2}^{2}\oplus \F\ar[d]\ar[dr]\ar[drr]    &  &  \mathfrak{J}_{2,4}\oplus \F\ar[d]\ar[dl]\ar[ddlll]\ar[dddrr] & &\\
       &  \mathfrak{J}_{6,5}\ar[dl] \ar[d] &   \J_{1,5}\ar[dd]& \J_{2,5}\ar[ddl]\ar[dlll] &\J_{1,2}\oplus \J_{1,3}\ar[ddlll] & &\\
        \mathfrak{J}_{5,5}\ar[dr]  &  \J_{3,5}\ar[d]\ar[dr]&      & &&\\
          &   \J_{4,5} & \J_{8,5} & &  & &\J_{7,5}  }
$$
where the arrows show jump deformations.
\begin{rem}
The jump deformation of the algebra $\J_{1,4}$ was also obtained in \cite{B5} as an example. However, there is a mistake there in the computation of cohomology.
\end{rem}

\section{Formal versal deformations}
\begin{defn}\label{v} \cite{exma2}
A formal deformation $\eta$ of a JJ algebra $\J$ with base $A$ is called \emph{versal} (or \emph{miniversal}) if

(1) for any formal deformation $\lambda$ of $\J$ with a base $B$, there exists a homomorphism
$\varphi:A\rightarrow B$ such that the deformation $\lambda$ is equivalent to the push-out $\varphi_{\ast}\eta$;

(2) in the notations of $(1)$, if $A$ satisfies the condition $\mathfrak{m}_{A}^{2}=0$, then  $\varphi$ is unique.
\end{defn}
Given an algebra, a main question in the study of deformations is whether there exists a versal object which can encode all deformations of this algebra. The answer is positive in the case $\mathrm{dim}\ \mathbb{H}<\infty$, which was proved by
Schlessinger's general theory \cite[Theorem 2.11]{Sch}. The method  to   construct versal deformations was worked out for Lie algebras \cite{exma2,Fuch} and Leibniz algebras \cite{Lei}. In this section, we generalize this method to the category of JJ algebras. At the end, we construct versal deformations in dimension 3.
From now on, we suppose that $\J$ is a JJ algebra with $\mathrm{dim}\ \mathbb{H}<\infty$.

\subsection{Universal infinitesimal deformation}
This Section is based on the Lie algebra case \cite{exma2, Fuch}.

We start our construction with the infinitesimal level. According to Definition \ref{v} (2), we try to choose an infinitesimal deformation which has the universality property as defined below.
\begin{defn}
An infinitesimal deformation $\eta$ of a JJ algebra $\J$ with base $A$ is called \emph{universal} if
for any infinitesimal deformation $\lambda$ of $\J$ with a finite-dimensional base $B$, there exists a unique homomorphism
$\varphi:A\rightarrow B$ such that the deformation $\lambda$ is equivalent to the push-out $\varphi_{\ast}\eta$.
\end{defn}
Next, we give a construction of a universal infinitesimal deformation, which is parallel to the Lie case \cite{Fuch} and Leibniz case \cite{Lei}.
Denote by $\mathbb{H}'$ the dual space of $\mathbb{H}$. Consider the algebra $A=\F\oplus\mathbb{H}'$ with the second summand being an ideal with zero multiplication. It is easy to see that $\mathbb{H}'$ is the maximal ideal of $A$ and its square is zero.
For  $\alpha\in \mathbb{H}$, we denote by $\mu(\alpha)$ a cocycle representing the cohomology class $\alpha$.
From the isomorphism
$$A\otimes \J=(\F\otimes\J)\oplus(\mathbb{H}'\otimes\J)=\J\oplus \mathrm{Hom}(\mathbb{H},\J),$$
we define a JJ algebra structure on $A\otimes \J$ with the formula
$$(x_1,\varphi_1)\cdot(x_2,\varphi_2)=(x_1x_2,\psi),$$
where the mapping $\psi:\mathbb{H}\rightarrow\J$ is given by
$$\psi(\alpha)=\mu(\alpha)(x_1,x_2)+x_1\varphi_2(\alpha)+x_2\varphi_1(\alpha),$$
for $x_1,x_2\in\J$, $\varphi_1,\varphi_2\in\mathrm{Hom}(\mathbb{H},\J)$, $\alpha\in\mathbb{H}$.
It is straightforward to check that the above multiplication defines an infinitesimal deformation with base $A$ (the Jacobi identity follows from $\mathrm{d}\mu(\alpha)=0$).

\begin{lem}
Up to isomorphism, the above infinitesimal deformation does not depend on the choice of $\mu$. Denote this deformation by $\eta_{\J}$.
\end{lem}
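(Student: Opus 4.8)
The plan is to show that two choices of linear section $\mu,\mu':\mathbb{H}\to\mathrm{Z}^{2}(\J,\J)$ of the projection $\mathrm{Z}^{2}(\J,\J)\to\mathbb{H}$ yield equivalent infinitesimal deformations, by exhibiting an explicit $A$-linear automorphism of $A\otimes\J$ that reduces to the identity modulo $\mathfrak{m}_{A}$ and intertwines the two multiplications (which I denote $\cdot_{\mu}$ and $\cdot_{\mu'}$). Equivalence of deformations with the common base $A=\F\oplus\mathbb{H}'$ then gives the claim.

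First I would record the shape of an admissible isomorphism. Since $(\mathbb{H}')^{2}=0$, any $A$-linear endomorphism of $A\otimes\J=\J\oplus\Hom(\mathbb{H},\J)$ that reduces to the identity modulo $\mathfrak{m}_{A}$ is determined by its restriction to $1\otimes\J$ and must have the form $\Phi(x,\varphi)=(x,\varphi+h(x))$, where $h\in\Hom(\J,\Hom(\mathbb{H},\J))$; equivalently $h(x)(\alpha)=\lambda(\alpha)(x)$ for a linear map $\lambda:\mathbb{H}\to\S^{1}(\J,\J)$. Such a $\Phi$ is automatically bijective, with inverse $(x,\varphi)\mapsto(x,\varphi-h(x))$, and it commutes with the augmentation $\varepsilon_{A}\otimes\mathrm{id}_{\J}$.

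Next I would expand both sides of $\Phi(u\cdot_{\mu}v)=\Phi(u)\cdot_{\mu'}\Phi(v)$ for $u=(x_{1},\varphi_{1})$ and $v=(x_{2},\varphi_{2})$, using the defining formula for $\psi$. The contributions $x_{1}\varphi_{2}(\alpha)+x_{2}\varphi_{1}(\alpha)$ cancel, and the identity reduces, for every $\alpha\in\mathbb{H}$, to
$$\mu'(\alpha)(x_{1},x_{2})-\mu(\alpha)(x_{1},x_{2})=\lambda(\alpha)(x_{1}\cdot x_{2})-x_{1}\cdot\lambda(\alpha)(x_{2})-x_{2}\cdot\lambda(\alpha)(x_{1}).$$
By the definition of the differential in Section 2, the right-hand side is exactly $\mathrm{d}\big(\lambda(\alpha)\big)(x_{1},x_{2})$. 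Hence $\Phi$ intertwines the two products if and only if $\lambda$ satisfies $\mathrm{d}\big(\lambda(\alpha)\big)=\mu'(\alpha)-\mu(\alpha)$ for all $\alpha$.

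The remaining point, which I regard as the only real content, is to produce such a linear $\lambda$. Since $\mu(\alpha)$ and $\mu'(\alpha)$ represent the same class $\alpha$, their difference lies in $\mathrm{B}^{2}(\J,\J)=\mathrm{Im}\,\mathrm{d}$, so a preimage under $\mathrm{d}$ exists for each $\alpha$; choosing such preimages on a basis of the finite-dimensional space $\mathbb{H}$ and extending linearly produces a well-defined linear $\lambda$ with $\mathrm{d}\circ\lambda=\mu'-\mu$, the choice being harmless up to $\mathrm{Z}^{1}(\J,\J)$. With this $\lambda$, the map $\Phi$ is a JJ algebra isomorphism from the deformation built from $\mu$ to the one built from $\mu'$, compatible with the augmentation, so the two deformations are equivalent and $\eta_{\J}$ is well-defined up to isomorphism. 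The main obstacle is purely bookkeeping: matching the index and sign conventions in the formula for $\psi$ against those of $\mathrm{d}$ so that the cohomological condition emerges precisely, together with the use of $\dim\mathbb{H}<\infty$ to guarantee that $\lambda$ can be taken linear in $\alpha$.
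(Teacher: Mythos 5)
Your proposal is correct and follows essentially the same route as the paper: the paper also writes $\mu'(\alpha)=\mu(\alpha)+\mathrm{d}\gamma(\alpha)$ and defines the automorphism $\rho(x,\varphi)=(x,\varphi+\gamma(\cdot)(x))$ (your $\Phi$ with $\lambda=\gamma$), reducing the intertwining condition to exactly the cohomological identity you derive. Your added remark about choosing preimages on a basis of $\mathbb{H}$ to make $\lambda$ linear is a small point of care the paper leaves implicit, but it does not change the argument.
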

\begin{proof}
Let $\mu'(\alpha)$ be another cocycle representing the cohomology class $\alpha$. Then there exists $\gamma(\alpha)\in \mathrm{S}^{1}(\J,\J)$ such that $\mu'(\alpha)=\mu(\alpha)+\mathrm{d}\gamma(\alpha)$.
Define a linear endomorphism $\rho$ of the space $A\otimes \J=\J\oplus \mathrm{Hom}(\mathbb{H},\J)$ by the formula
$$\rho(x,\varphi)=(x,\psi),\quad \psi(\alpha)=\varphi(\alpha)+\gamma(\alpha)(x),$$
where $x\in\J$, $\varphi\in\mathrm{Hom}(\mathbb{H},\J)$ and  $\alpha\in\mathbb{H}$.
Clearly, $\rho$ is a linear automorphism and the inverse of $\rho$ is given by replacing $\gamma$ with
$-\gamma$ in the formula. In order to prove that $\rho$ preserves the multiplication, it is sufficient to show that
for any $x_1, x_2\in\J$, $\varphi_1 ,\varphi_2\in \mathrm{Hom}(\mathbb{H},\J)$, and $\alpha\in\mathbb{H}$, one has
\begin{eqnarray*}
&&\mu(\alpha)(x_1,x_2)+x_1\varphi_2(\alpha)+x_2\varphi_1(\alpha)+\gamma(\alpha)(x_1\cdot x_2) \\
&&  \qquad\qquad  \qquad=\mu'(\alpha)(x_1,x_2)+x_1\cdot(\varphi_2(\alpha)+\gamma(\alpha)x_2)
+x_2\cdot(\varphi_1(\alpha)+\gamma(\alpha)x_1).
\end{eqnarray*}
The proof follows from  the equality $\mu'(\alpha)=\mu(\alpha)+\mathrm{d}\gamma(\alpha)$ and the definition of $\mathrm{d}$.
\end{proof}

\begin{rem}
Suppose $\{h_i\}_{i=1}^{n}$ is a basis of $\mathbb{H}$ and $\{h'_i\}_{i=1}^{n}$ is the dual basis. Let $\mu(h_i)=\mu_i\in \mathrm{Z}^{2}(\J,\J)$. Under the identification $A\otimes \J=\J\oplus \mathrm{Hom}(\mathbb{H},\J)$, an element $(x,\varphi)\in \J\oplus\mathrm{Hom}(\mathbb{H},\J)$
corresponds to $1\otimes x+\sum_{i=1}^{n}h'_i\otimes\varphi(h_i)\in A\otimes \J$. Then for $(x_1,\varphi_1), (x_2, \varphi_2)\in \J\oplus\mathrm{Hom}(\mathbb{H},\J)$, their multiplication $(x_1x_2,\psi)$ corresponds to
$$1\otimes x_1x_2+\sum_{i=1}^{n}h'_i\otimes(\mu_i(x_1,x_2)+ x_1 \varphi_2(h_i)+x_2\varphi_1(h_i)).$$
In particular, for $x_1,x_2\in\J$ we have
\begin{equation}\label{in}
(1\otimes x_1\cdot 1\otimes x_2)_{\eta_{\J}}=1\otimes x_1x_2+\sum_{i=1}^{n}h'_i\otimes\mu_i(x_1,x_2).
\end{equation}
\end{rem}

\begin{lem}\label{uni}
Let $\lambda$ be an infinitesimal deformation of the JJ algebra $\J$ with the finite-dimensional base $A$. Take $\xi\in\mathfrak{m}_{A}'$, or, equivalently, $\xi\in A'$ and $\xi(1) = 0$. For $x_1,x_2\in\J$, define a cochain
$\alpha_{\lambda,\xi}\in\mathrm{S}^{2}(\J,\J)$ satisfying
$$\alpha_{\lambda,\xi}(x_1,x_2)=(\xi\otimes \mathrm{id}_{\J})(1\otimes x_1\cdot1\otimes x_2)_\lambda \in \F\otimes \J=\J, \quad x_1,x_2\in\J.$$

(1) The cochain $\alpha_{\lambda,\xi}$ is a cocycle. Moreover, define two mappings:
$\alpha_{\lambda}:\mathfrak{m}_{A}'\rightarrow \mathrm{Z}^{2}(\J,\J)$ and $a_{\lambda}:\mathfrak{m}_{A}'\rightarrow \mathbb{H}$ such that $\alpha_{\lambda}(\xi)=\alpha_{\lambda,\xi}$ and $a_{\lambda}(\xi)$ is the cohomology class of $\alpha_{\lambda,\xi}$.
\smallskip

(2) Let $\lambda'$ be another infinitesimal deformation of $\J$ with base $A$. Then $\lambda$ and $\lambda'$ are equivalent if and only if $a_{\lambda}=a_{\lambda'}$.
\end{lem}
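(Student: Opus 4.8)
The plan is to reduce everything to coordinates adapted to the hypothesis $\mathfrak{m}_A^2=0$. Fix a basis $\{m_1,\dots,m_k\}$ of $\mathfrak{m}_A$, so that a typical element of $\J_A=A\otimes\J$ decomposes along $1\otimes\J$ and the summands $m_j\otimes\J$. Since $\varepsilon_A\otimes\mathrm{id}_\J$ is a homomorphism, the deformation $\lambda$ must take the form
$$(1\otimes x\cdot 1\otimes y)_\lambda=1\otimes xy+\sum_{j=1}^{k}m_j\otimes\beta_j(x,y),$$
for uniquely determined cochains $\beta_j\in\S^2(\J,\J)$. Unwinding the definition of $\alpha_{\lambda,\xi}$ gives $\alpha_{\lambda,\xi}=\sum_j\xi(m_j)\beta_j$; in particular, if $\{m_j^{*}\}$ denotes the dual basis of $\mathfrak{m}_A'$, then $\alpha_\lambda(m_j^{*})=\beta_j$. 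Thus all information in $\alpha_\lambda$ and $a_\lambda$ is encoded in the family $\{\beta_j\}$.

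For part (1), I would expand the Jacobi identity for $\lambda$ on elements $1\otimes x$, $1\otimes y$, $1\otimes z$. Using $A$-bilinearity of $(\ \cdot\ )_\lambda$ together with $m_im_j=0$, every term of the form $m_i\otimes(\cdots)$ that gets multiplied again by some $m_j$ drops out, so only the first-order part survives. Collecting the coefficient of each $m_j$ and comparing with the formula for $\mathrm{d}$ in Section 2, the $\mathfrak{m}_A$-component of the Jacobiator is exactly $\sum_j m_j\otimes\mathrm{d}\beta_j(x,y,z)$. Since $\lambda$ satisfies the Jacobi identity and the $m_j$ are linearly independent, each $\mathrm{d}\beta_j=0$. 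Hence every $\beta_j$, and therefore every combination $\alpha_{\lambda,\xi}=\sum_j\xi(m_j)\beta_j$, is a cocycle; equivalently $\mathrm{d}\alpha_{\lambda,\xi}=(\xi\otimes\mathrm{id}_\J)$ applied to the vanishing Jacobiator. The maps $\alpha_\lambda$ and $a_\lambda$ are then well defined and $\F$-linear, since $\xi\mapsto\alpha_{\lambda,\xi}$ is linear and $\mathrm{Z}^2(\J,\J)\to\mathbb{H}$ is the quotient map.

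For part (2), I would first record that an equivalence $\varphi\colon(\J_A,\lambda)\to(\J_A,\lambda')$ is an $A$-linear JJ-algebra isomorphism with $(\varepsilon_A\otimes\mathrm{id}_\J)\circ\varphi=\varepsilon_A\otimes\mathrm{id}_\J$, i.e.\ one reducing to the identity modulo $\mathfrak{m}_A$; hence it is determined by
$$\varphi(1\otimes x)=1\otimes x+\sum_{j=1}^{k}m_j\otimes\gamma_j(x),\qquad\gamma_j\in\S^1(\J,\J),$$
and any such $\varphi$ is automatically invertible because $\varphi=\mathrm{id}+N$ with $N$ landing in $\mathfrak{m}_A\otimes\J$ and $N^2=0$. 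Writing $\beta_j'$ for the cochains of $\lambda'$, I would expand both sides of the multiplicativity condition $\varphi((1\otimes x\cdot 1\otimes y)_\lambda)=(\varphi(1\otimes x)\cdot\varphi(1\otimes y))_{\lambda'}$, again discarding every $m_im_j$-term. Comparing the coefficient of each $m_j$ and using the formula for $\mathrm{d}$ on $1$-cochains yields the system
$$\beta_j'(x,y)=\beta_j(x,y)+\mathrm{d}\gamma_j(x,y),\qquad 1\le j\le k.$$
Therefore $\lambda$ and $\lambda'$ are equivalent iff there exist $\gamma_j\in\S^1(\J,\J)$ solving this system, i.e.\ iff $\beta_j$ and $\beta_j'$ are cohomologous for every $j$. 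Evaluating on the dual basis gives $a_\lambda(m_j^{*})=[\beta_j]$ and $a_{\lambda'}(m_j^{*})=[\beta_j']$, so this condition is precisely $a_\lambda=a_{\lambda'}$.

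The routine content is the two coordinate expansions; the point requiring care is the bookkeeping of the $\mathfrak{m}_A^2=0$ cancellations and confirming that the sign conventions of the JJ differential $\mathrm{d}$ match what appears when the Jacobi identity (in (1)) and the multiplicativity of $\varphi$ (in (2)) are expanded. The genuinely structural step, and the one I would treat most carefully, is the ``if'' direction of (2): given $a_\lambda=a_{\lambda'}$, one must choose for each $j$ a cochain $\gamma_j$ with $\beta_j'-\beta_j=\mathrm{d}\gamma_j$ and then verify that the single $A$-linear map $\varphi$ assembled from all the $\gamma_j$ simultaneously intertwines $\lambda$ and $\lambda'$, so that the per-$j$ cohomological condition is seen to be not only necessary but also sufficient.
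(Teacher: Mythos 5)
Your proposal is correct and follows essentially the same route as the paper: both parts rest on writing the deformation (resp.\ the equivalence) as the identity plus first-order terms along a basis of $\mathfrak{m}_A$, using $\mathfrak{m}_A^2=0$ to discard higher-order terms, and identifying the surviving coefficients with the JJ differential $\mathrm{d}$ on $2$- and $1$-cochains respectively; the only cosmetic difference is that you extract the component cochains $\beta_j$ first and treat $\alpha_{\lambda,\xi}=\sum_j\xi(m_j)\beta_j$ as a linear combination, whereas the paper applies $\xi\otimes\mathrm{id}_{\J}$ to the Jacobiator directly. Your closing remark about the ``if'' direction of (2) is handled in the paper exactly as you anticipate: the two coordinate expansions agree if and only if $\beta_j'-\beta_j=\mathrm{d}\gamma_j$ for every $j$, so the per-$j$ cohomological condition is both necessary and sufficient.
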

\begin{proof}
(1) For $x_1,x_2,x_3\in \J$, by the definition of the differential $\mathrm{d}$,
\begin{equation}\label{1}
\mathrm{d}\alpha_{\lambda,\xi}(x_1,x_2,x_3)=\sum_{\circlearrowleft x_1,x_2,x_3}\alpha_{\lambda,\xi}(x_1,x_2 x_3)+x_1\alpha_{\lambda,\xi}(x_2,x_3).
\end{equation}
By Definition \ref{e},
\begin{equation}\label{2}
(1\otimes x_2\cdot1\otimes x_3)_{\lambda}=1\otimes x_2x_3+\sum_i m_i\otimes y_i
\end{equation}
where $m_i\in\mathfrak{m}_{A}$, $y_i\in\J$. Hence
\begin{eqnarray*}\label{3}
(\xi\otimes\mathrm{id})(1\otimes x_1\cdot(1\otimes x_2\cdot1\otimes x_3)_{\lambda})_{\lambda}&=&(\xi\otimes\mathrm{id})(1\otimes x_1\cdot1\otimes x_2x_3)_{\lambda}\\
&+&(\xi\otimes\mathrm{id})\sum_i m_i(1\otimes x_1\cdot1\otimes y_i)_{\lambda}\\
&=&\alpha_{\lambda,\xi}(x_1,x_2x_3)+(\xi\otimes\mathrm{id})\sum_i m_i(1\otimes x_1\cdot1\otimes y_i)_{\lambda}
\end{eqnarray*}
For the second term, one has
$$m_i(1\otimes x_1\cdot1\otimes y_i)_{\lambda}=m_i(1\otimes x_1y_i+h),$$
where $h\in \mathfrak{m}_{A}\otimes\J$. Since $\mathfrak{m}_{A}^{2}=0$, we have
$m_i(1\otimes x_1\cdot1\otimes y_i)_{\lambda}=m_i\otimes x_1y_i$. Hence
\begin{eqnarray*}
(\xi\otimes\mathrm{id})(1\otimes x_1\cdot(1\otimes x_2\cdot1\otimes x_3)_{\lambda})_{\lambda}&=&\alpha_{\lambda,\xi}(x_1,x_2x_3)+(\xi\otimes\mathrm{id})\sum_i m_i\otimes x_1y_i \\
   &=&\alpha_{\lambda,\xi}(x_1,x_2x_3)+\sum_i \xi(m_i)( x_1y_i) \\
   &=&\alpha_{\lambda,\xi}(x_1,x_2x_3)+x_1\left(\sum_i \xi(m_i)y_i\right)  \\
   &=&  \alpha_{\lambda,\xi}(x_1,x_2x_3)+x_1\left((\xi\otimes\mathrm{id})\sum_i m_i\otimes y_i\right)
\end{eqnarray*}
From Eq. (\ref{2}), we have
\begin{eqnarray*}
(\xi\otimes\mathrm{id})\sum_i m_i\otimes y_i&=&(\xi\otimes\mathrm{id})((1\otimes x_2\cdot1\otimes x_3)_{\lambda}-1\otimes x_2x_3)\\
   &=&(\xi\otimes\mathrm{id})(1\otimes x_2\cdot1\otimes x_3)_{\lambda}\\
   &=&\alpha_{\lambda,\xi}(x_2,x_3)
\end{eqnarray*}
In conclusion,
$$(\xi\otimes\mathrm{id})(1\otimes x_1\cdot(1\otimes x_2\cdot1\otimes x_3)_{\lambda})_{\lambda}=
\alpha_{\lambda,\xi}(x_1,x_2x_3)+x_1\alpha_{\lambda,\xi}(x_2,x_3).$$
From Eq. (\ref{1}),
\begin{eqnarray*}
\mathrm{d}\alpha_{\lambda,\xi}(x_1,x_2,x_3)&=&\sum_{\circlearrowleft_{x_1,x_2,x_3}}\alpha_{\lambda,\xi}(x_1,x_2 x_3)+x_1\alpha_{\lambda,\xi}(x_2,x_3) \\
&=&(\xi\otimes\mathrm{id}) \sum_{\circlearrowleft_{x_1,x_2,x_3}} (1\otimes x_1\cdot(1\otimes x_2\cdot1\otimes x_3)_{\lambda})_{\lambda}
\end{eqnarray*}
From the Jacobi identity for $(,)_\lambda$, $\mathrm{d}\alpha_{\lambda,\xi}=0$.

(2)
Suppose that $\rho$ is an $A$-linear automorphism of $A\otimes\J$  satisfying $(\varepsilon_{A}\otimes \mathrm{id})\circ\rho=\varepsilon_{A}\otimes \mathrm{id}$.
In fact, $\rho$ is determined by the value $\rho(1\otimes x)$ for $x\in\J$ by  $A$-linearity.
We have the following isomorphisms
$$ A\otimes\J=(\F\oplus\mathfrak{m}_{A})\otimes\J=(\F\otimes\J)\oplus(\mathfrak{m}_{A}\otimes\J)=\J\oplus(\mathfrak{m}_{A}\otimes\J).
$$
Choose linear mappings
$a_{\rho}:\J\rightarrow\J$ and $b_{\rho}:\J\rightarrow \mathfrak{m}_{A}\otimes\J$ such that
$\rho(1\otimes x)=a_{\rho}(1\otimes x)+b_{\rho}(1\otimes x)$ for any $x\in\J$.
By the compatibility $(\varepsilon_{A}\otimes \mathrm{id})\circ\rho=\varepsilon_{A}\otimes \mathrm{id}$, we have $a_{\rho}=\mathrm{id}_{\J}$.  We use the isomorphism
\begin{align*}
  \mathrm{Hom}(\J,\mathfrak{m}_{A}\otimes\J) &\longrightarrow\mathfrak{m}_{A}\otimes \mathrm{C}^{1}(\J,\J), \\
  b_{\rho}&\mapsto\sum_{i=1}^{r}m_i\otimes \varphi_{i},
\end{align*}
where $\{m_i\}_{i=1}^{r}$ is a basis of $\mathfrak{m}_{A}$, $\{m'_i\}_{i=1}^{r}$ is the dual basis and  $\varphi_i=(m'_i\otimes \mathrm{id})\circ b_{\rho}$.
We have
\begin{equation}\label{iso}
\rho(1\otimes x)=1\otimes x+\sum_{i=1}^{r}m_i\otimes \varphi_{i}(x),\quad x\in\J.
\end{equation}
Consider $\rho((1\otimes x_1\cdot1\otimes x_2)_{\lambda})$ and $(\rho(1\otimes x_1)\cdot\rho(1\otimes x_2))_{\lambda'}$ for $x_1,x_2\in\J$. From Eq. (\ref{iso}), we have
$$\rho((1\otimes x_1\cdot1\otimes x_2)_{\lambda})=1\otimes x_1x_2+\sum_{i=1}^{r}m_i\otimes\varphi_{i}(x_1x_2)+\sum_{i=1}^{r}m_i\otimes\alpha_{\lambda,m'_i}(x_1,x_2)$$
and
\begin{eqnarray*}
(\rho(1\otimes x_1)\cdot\rho(1\otimes x_2))_{\lambda'}&=&1\otimes x_1x_2+\sum_{i=1}^{r}m_i\otimes\alpha_{\lambda',m'_i}(x_1,x_2)+\sum_{i=1}^{r}m_i\otimes (x_1\varphi_i(x_2)) \\
  && +\sum_{i=1}^{r}m_i\otimes (x_2\varphi_i(x_1)).
\end{eqnarray*}
 Hence $\rho$ establishes an isomorphism between the JJ algebras  $\lambda$ and $\lambda'$ if only and if $\alpha_{\lambda',m'_i}-\alpha_{\lambda,m'_i}=\mathrm{d}\varphi_i$ for $1\leq i\leq r$, that is, $a_{\lambda}=a_{\lambda'}$.
\end{proof}

\begin{thm}
$\eta_{\J}$ is universal infinitesimal deformation of $\J$.
\end{thm}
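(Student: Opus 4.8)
The plan is to establish both requirements of universality simultaneously by reducing everything to the cohomological invariant $a_{(-)}$ furnished by Lemma \ref{uni}, in parallel with the Lie and Leibniz cases. First I would record the key bookkeeping fact. Since the base of an infinitesimal deformation has order $1$, any base $B$ under consideration satisfies $\mathfrak{m}_{B}^{2}=0$, and by construction $\mathfrak{m}_{A}=\mathbb{H}'$ also squares to zero. Consequently a unital homomorphism $\varphi\colon A\to B$ compatible with the augmentations (so $\varphi(1)=1$ and $\varepsilon_{B}\circ\varphi=\varepsilon_{A}$) is precisely the data of an arbitrary linear map $\varphi_{0}:=\varphi|_{\mathbb{H}'}\colon\mathbb{H}'\to\mathfrak{m}_{B}$: the only multiplicativity condition still to verify concerns products of two elements of $\mathfrak{m}_{A}$, and these vanish on both sides because the cross term $\varphi_{0}(h'_i)\varphi_{0}(h'_j)$ lands in $\mathfrak{m}_{B}^{2}=0$. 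Thus homomorphisms $A\to B$ are in natural bijection with $\mathrm{Hom}(\mathbb{H}',\mathfrak{m}_{B})$.

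Next I would compute the invariant attached to the push-out. Starting from the explicit formula (\ref{in}) for $\eta_{\J}$ and applying the push-out rule (\ref{po}), one obtains
$$(1\otimes x_1\cdot 1\otimes x_2)_{\varphi_{\ast}\eta_{\J}}=1\otimes x_1x_2+\sum_{i=1}^{n}\varphi_{0}(h'_i)\otimes\mu_i(x_1,x_2).$$
Feeding this into the definition of $\alpha_{\lambda,\xi}$ from Lemma \ref{uni}, for any $\xi\in\mathfrak{m}_{B}'$ I expect
$$\alpha_{\varphi_{\ast}\eta_{\J},\xi}=\sum_{i=1}^{n}\xi\bigl(\varphi_{0}(h'_i)\bigr)\,\mu_i,$$
whose cohomology class is $\sum_{i}\xi(\varphi_{0}(h'_i))\,h_i$. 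Recognizing the assignment $\xi\mapsto\sum_{i}\xi(\varphi_{0}(h'_i))h_i$ as exactly the transpose of $\varphi_{0}$, under the canonical identifications $\mathbb{H}''\cong\mathbb{H}$ and $\mathfrak{m}_{B}''\cong\mathfrak{m}_{B}$ (legitimate because $\dim\mathbb{H}<\infty$ and $B$ is finite-dimensional), this reads $a_{\varphi_{\ast}\eta_{\J}}=\varphi_{0}^{t}$.

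Finally I would combine these two observations. By Lemma \ref{uni}(2), the push-out $\varphi_{\ast}\eta_{\J}$ is equivalent to $\lambda$ if and only if $a_{\varphi_{\ast}\eta_{\J}}=a_{\lambda}$, that is, if and only if $\varphi_{0}^{t}=a_{\lambda}$. Since transposition is a linear isomorphism $\mathrm{Hom}(\mathbb{H}',\mathfrak{m}_{B})\xrightarrow{\ \sim\ }\mathrm{Hom}(\mathfrak{m}_{B}',\mathbb{H})$, there is exactly one $\varphi_{0}$ solving $\varphi_{0}^{t}=a_{\lambda}$, namely the transpose of $a_{\lambda}$. Translating back through the bijection of the first paragraph yields exactly one homomorphism $\varphi\colon A\to B$ with $\varphi_{\ast}\eta_{\J}$ equivalent to $\lambda$, which is precisely the existence-and-uniqueness statement defining universality.

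The conceptual content is carried entirely by Lemma \ref{uni}; the step I expect to demand the most care is the middle one, namely pinning down the identity $a_{\varphi_{\ast}\eta_{\J}}=\varphi_{0}^{t}$ while keeping the dual-space identifications honest. One must invoke finite-dimensionality to identify the double duals, and must check en route that $\varphi$ genuinely is an algebra homomorphism via the square-zero conditions. Once these are in place, both universality clauses fall out at once from the bijectivity of transposition, so that the uniqueness requirement needs no argument separate from existence.
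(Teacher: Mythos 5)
Your proposal is correct and follows essentially the same route as the paper: both compute the invariant $a_{\varphi_{\ast}\eta_{\J}}$ of the push-out via Eqs. (\ref{po}) and (\ref{in}) and reduce existence and uniqueness to Lemma \ref{uni}(2), with your $\varphi_{0}=a_{\lambda}^{t}$ being exactly the paper's choice $\varphi=\mathrm{id}\oplus a'_{\lambda}$. Your additional remarks (that the square-zero conditions make every linear map $\mathbb{H}'\to\mathfrak{m}_{B}$ an algebra homomorphism, and that uniqueness follows from bijectivity of transposition) merely make explicit what the paper leaves implicit.
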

\begin{proof}
For an infinitesimal deformation $\lambda$ of $\J$ with a finite-dimensional base $A$, consider the mapping
$$\varphi=\mathrm{id}\oplus a'_{\lambda}:\F\oplus\mathbb{H}'\rightarrow \F\oplus \mathfrak{m}_A=A,$$
where $a'_{\lambda}$ is the dual mapping of $a_{\lambda}$, that is, $m'(a'_{\lambda}(h'))=h'(a_{\lambda}(m'))$ for $h\in \mathbb{H}$, $h'\in \mathbb{H}'$, $m\in \mathfrak{m}_{A}$ and $m'\in \mathfrak{m'}_{A}$.
Let $\{h_i\}_{i=1}^{n}$ be  a basis of $\mathbb{H}$, $\{m_i\}_{i=1}^{r}$  a basis of $\mathfrak{m}_{A}$ and  $\{h'_i\}_{i=1}^{n}$, $\{m'_i\}_{i=1}^{r}$ be the corresponding dual basis. Then we have
\begin{equation}\label{e1}
a_{\lambda}(m'_i)=\sum^{n}_{j=1}h'_{j}(a_{\lambda}(m'_i))h_j,\quad 1\leq i\leq r.
\end{equation}
From Eqs. (\ref{po}), (\ref{in}) and (\ref{e1}), for $1\leq i\leq r$, $x_1,x_2\in\J$,
\begin{eqnarray*}
\alpha_{\varphi_{\ast}\eta_{\J}}(m'_i)(x_1,x_2)&=&(m'_i\otimes \mathrm{id})(1\otimes x_1\cdot1\otimes x_2)_{\varphi_{\ast}\eta_{\J}} \\
&=&(m'_i\otimes \mathrm{id})(1\otimes x_1x_2+\sum_{j=1}^{n}\varphi(h'_j)\otimes\mu(h_j)(x_1,x_2))  \\
  &=&\sum_{j=1}^{n}m'_i(a'_{\lambda}(h'_j))\otimes\mu(h_j)(x_1,x_2) \\
   &=&\sum_{j=1}^{n}h'_j(a_{\lambda}(m'_i))\otimes\mu(h_j)(x_1,x_2) \\
   &=&\mu\left(\sum_{j=1}^{n}h'_j(a_{\lambda}(m'_i))h_j\right)(x_1,x_2)\\
   &=&\mu(a_{\lambda}(m'_i))(x_1,x_2).
\end{eqnarray*}
Then $a_{\varphi_{\ast}\eta_{\J}}=a_{\lambda}$. By Lemma \ref{uni} (2), $\lambda$ is equivalent to $\varphi_{\ast}\eta_{\J}$ and  $\varphi$ is unique.
\end{proof}

\subsection{Extension of the universal infinitesimal deformation}
The universal infinitesimal deformation, constructed in Section 5.1, gives the construction of versal deformation of order 1. Using the same method as for Lie and Leibniz cases, we extend it to higher order to get a versal formal deformation. Here we introduce the extensions without proofs, for more details and examples, see \cite{Fuch, Lei,exam1,exma2}.

Given a JJ algebra $\J$, let $\{\varphi_i\}^{n}_{i=1}$  be cocylces whose cohomology classes form a basis of $\mathbb{H}$. By Section 5.1, a versal deformation of order 1, parameterized by  $\F[\![t_1,\ldots,t_n]\!]/(\mathfrak{m}^{2})=\mathrm{span}_{\F}\{1,t_1,\cdots,t_n\}$, can be written as
\begin{equation}\label{re}
\mu_t=\mu_0+t_1\varphi_1+\cdots+t_n\varphi_n,
\end{equation}
where  $\mu_0$ is the original multiplication of $\J$.  Now let us try to extend it to a deformation of order 2 parameterized by $\F[\![t_1,\ldots,t_n]\!]/I$ where $I$ is an ideal containing $\mathfrak{m}^{3}$.
This deformation can be written as
$$\mu_t=\mu_0+\sum^{n}_{i=1}t_i\varphi_i+\sum t_it_j \varphi_{ij},$$
where $\varphi_{ij}\in \mathrm{S}^{2}(\J,\J)$. The relations on the base are given by the conditions
$$-2\sum \mathrm{d}\varphi_{ij}t_{i}t_{j}\equiv\sum[\varphi_{i},\varphi_{j}]t_{i}t_{j}\ \mathrm{mod}\ I.$$
This means that the right-hand side (which is always a three-cocycle) must be coboundary. So the ideal $I$ is generated by the polynomials, obtained by Massey products of the right-hand side and $\mathfrak{m}^{3}$. For $\varphi_{ij}$ one can choose any 2-cochain satisfying the above condition.
\begin{rem}
In particular, if all brackets are zero,
then the multiplication (\ref{re}) defines a JJ algebra itself. This means we get a versal deformation without any further construction.
\end{rem}

\subsection{Examples}

Now we carry out the construction of formal versal deformation for 3-dimensional JJ algebras  $\J_{1,2}\oplus \F$ and $\J_{1,3}$. First, we get a versal deformation of order 1 with the help of the universal infinitesimal deformation, parameterized by $\F[\![t_1,\ldots,t_n]\!]/(\mathfrak{m}^{2})$, where $\mathfrak{m}$ is the maximal ideal of $\F[\![t_1,\ldots,t_n]\!]$, generated by $t_1,\ldots, t_n$. Then we extend the deformation to the next order by computing all  Massey squares, including mixed ones.

\begin{exa}
(Formal versal deformation of $\J_{1,2}\oplus \F$)
From Section 4, the 2-cohomology is spanned by the following representative cocycles:
$$\varphi_1=e^{1,3}_1-2e^{2,3}_2,\ \varphi_2=e^{1,3}_3,\ \varphi_3=e^{3,3}_2,\ \varphi_4=e^{3,3}_3.$$
Thus, the universal infinitesimal deformation is given by
$$(1\otimes e_i \cdot 1\otimes e_j)_{\eta}=1\otimes e_ie_j+\sum_{k=1}^{4}t^{k}\otimes\varphi_k(e_i,e_j),\quad 1\leq i,j\leq 3,$$
parameterized by $A=\mathrm{span}_{\F}\{1,t_1,t_2,t_3,t_4\}$. Now, we try to extend this deformation. By computing all Massey products, except for $[\varphi_3,\varphi_3]=0$ (it gives us a real deformation considered as 1-parameter deformation), we get that every bracket gives a nontrivial cochain. These cochains, except for $[\varphi_2,\varphi_2]$ and $[\varphi_2,\varphi_3]$, are not coboundaries. Among these cochains, six of them are linearly independent.
They give us second order relations on the parameter space:
$$t^{2}_1=t^{2}_4=t_1t_2=t_1t_4=t_2t_4=t_3t_4-2t_1t_3=0.$$
From $\mathrm{d}(-2e^{2,3}_3)=-\frac{1}{2}[\varphi_2,\varphi_2]$ and $\mathrm{d}(-2e^{3,3}_1)=-\frac{1}{2}([\varphi_2,\varphi_3]+[\varphi_3,\varphi_2])$,
we get that the versal  deformation $(,)_{v}$ is defined
as follows:
\begin{eqnarray*}
  &&(e_1^{2})_{v}=e_2,\\
  &&(e_1e_3)_{v}=t_1e_1+t_2e_3, \\
  &&(e_2e_3)_{v}=-2t_1e_2-2t_{2}^{2}e_3, \\
  &&(e^{2}_3)_{v}=t_3e_2+t_4e_3-2t_2t_3e_1,
\end{eqnarray*}
parameterized by the factor space
$$\F[\![t_1,t_2,t_3,t_4]\!]/I,$$
where $I$ is generated by $t^{2}_1,t^{2}_4,t_1t_2,t_1t_4,t_2t_4,t_3t_4-2t_1t_3$ and $\mathfrak{m}^{3}$.
\end{exa}

To extend it to the next order, we have to compute Massey 3-products. But among those, only $[\varphi_2,\varphi_2]$ and $[\varphi_2,\varphi_3]$ are coboundaries. This means only the Massey 3-product $<\overline{\varphi}_2,\overline{\varphi}_2, \overline{\varphi}_3>$ is defined, but the others are  not coboundaries. So the versal 2-order deformation is a versal deformation as well.
\smallskip
The only 1-parameter real deformation is $(\cdot,\cdot)_t=(\cdot,\cdot)+t\varphi_3$.

\begin{exa}(Versal deformation of $\J_{1,3}$)
From Section 4, the 2-cohomology is spanned by the following two representative cocycles:
$$\varphi_1=e^{1,3}_1-2e^{2,3}_2+2e^{3,3}_{3},\ \varphi_2=e^{1,3}_3-2e^{3,3}_1.$$
Thus, the universal infinitesimal  deformation is given by
$$(1\otimes e_i\cdot 1\otimes e_j)_{\eta}=1\otimes e_ie_j+\sum_{k=1}^{2}t^{k}\otimes\varphi_k(e_i,e_j),\quad 1\leq i,j\leq 3,$$
parameterized by $A=\mathrm{span}_{\F}\{1,t_1,t_2\}$. Now, we try to extend this deformation. By computing all Massey products, we get three nontrivial cochains:
$$[\varphi_1,\varphi_1],\quad [\varphi_1,\varphi_2],\quad [\varphi_2,\varphi_2].$$
These three cochains are all not coboundaries and they are linearly independent. We get three relations on the parameter space:
$$t^{2}_1=t_1t_2=t^{2}_{2}=0.$$
As no higher order Massay product is defined, we get the versal deformation $(,)_v$ defined by
\begin{eqnarray*}
&&(e_1^{2})_v=e_{2} \\
&&(e_1e_3)_v=t_1e_1+t_2e_3, \\
&&(e_2e_3)_v=-2t_1e_2,\\
&&(e_3^{2})_v=e_2+2t_1e_3-2t_2e_1,
\end{eqnarray*}
and it is parameterized by the factor space
$$\F[\![t_1,t_2]\!]/(t^{2}_1,t^{2}_2,t_1t_2).$$
\end{exa}
\begin{rem}
In order to extend this deformation to third order, we have to compute Massey 3-products. But it turns out that none of them is defined, so this second order deformation is also a versal deformation. The 2-cohomology is not trivial, however this algebra is rigid.
\end{rem}
\begin{conclusion}
When we extend the universal infinitesimal deformation of $\J_{1,2}\oplus \F$ and $\J_{1,3}$, we get nontrivial Massey products. These nontrivial products give us the relations on the base.
We have shown in   that the JJ algebra $\J_{1,3}$ is formally rigid, while $\J_{1,2}\oplus \F$ has one real 1-parameter deformation which is isomorphic to $\J_{1,3}$ (see Ex. 3).
\end{conclusion}

\bigskip

\small\noindent \textbf{Acknowledgment}\\
The author would like to thank the Doctoral School of Physics, Univeristy of P\'ecs for support, and Prof. Alice Fialowski for giving the problem and regular useful discussions. All data generated or analysed during this study are included in this published article (and its supplementary information files). 
\bigskip

\small\noindent \textbf{Conflicts of interests}\\
The author declares that there are no conflicts of interests.

\end{document}